\documentclass[10pt,draftcls,onecolumn]{IEEEtran}
\pdfoutput=1 

\usepackage{cite}
\usepackage{amsmath,amssymb,amsfonts,dsfont}
\usepackage{mathrsfs}
\usepackage{algorithm}
\usepackage{algorithmic}
\usepackage{graphicx}
\usepackage{textcomp}
\usepackage{comment}
\usepackage{bm}
\usepackage{mathtools}

\usepackage{amsthm}   % can cause trouble in arXiv
\usepackage{xcolor}

\usepackage{diagbox}

\usepackage{array}

\usepackage{url}

\usepackage{subcaption}

\usepackage{float}
\usepackage{epsfig}
\usepackage{mathrsfs}
\usepackage{longtable}
\usepackage{xtab}
\usepackage{ifthen}
\usepackage{thmtools,thm-restate}
\usepackage[thinlines]{easytable}
\usepackage{pifont}% http://ctan.org/pkg/pifont
\usepackage{enumerate}
\usepackage{calrsfs}  % make mathcal fancier
\newboolean{showcomments}
\setboolean{showcomments}{true}
\newcommand{\lina}[1]{  \ifthenelse{\boolean{showcomments}}
	{ \textcolor{red}{(Lina says:  #1)}} {}  }

\newcommand{\nbf}{\noindent\textbf}
\newcommand{\nit}{\noindent\textit}

% DECLARE
\DeclarePairedDelimiter\floor{\lfloor}{\rfloor}
\DeclarePairedDelimiter\ceil{\lceil}{\rceil}

\DeclareMathOperator*{\argmin}{arg\,min}
\DeclareMathOperator*{\E}{\mathbb{E}}

% new command

\newcommand{\R}{\mathbb{R}}
\newcommand{\F}{\mathcal F}

\newcommand{\Pb}{\mathbb P}

\newcommand{\A}{\mathcal{A}}
%

% NEW THEOREM 

\newtheorem{example}{Example}
\newtheorem{remark}{Remark}

\newtheorem{assumption}{Assumption}
\allowdisplaybreaks

\newtheorem{lemma}{Lemma}
\newtheorem{theorem}{Theorem}
\newtheorem{corollary}{Corollary}
\newtheorem{proposition}{Proposition}

\allowdisplaybreaks

\def\BibTeX{{\rm B\kern-.05em{\sc i\kern-.025em b}\kern-.08em
		T\kern-.1667em\lower.7ex\hbox{E}\kern-.125emX}}

\allowdisplaybreaks

\begin{document}
%
% paper title
% Titles are generally capitalized except for words such as a, an, and, as,
% at, but, by, for, in, nor, of, on, or, the, to and up, which are usually
% not capitalized unless they are the first or last word of the title.
% Linebreaks \\ can be used within to get better formatting as desired.
% Do not put math or special symbols in the title.
\title{Online Optimization with Predictions and Switching Costs: Fast Algorithms and the Fundamental Limit}
%
%
% author names and IEEE memberships
% note positions of commas and nonbreaking spaces ( ~ ) LaTeX will not break
% a structure at a ~ so this keeps an author's name from being broken across
% two lines.
% use \thanks{} to gain access to the first footnote area
% a separate \thanks must be used for each paragraph as LaTeX2e's \thanks
% was not built to handle multiple paragraphs
%
\author{Yingying Li, Guannan Qu, and Na Li % <-this % stops a space
\thanks{The work was supported by NSF 1608509, NSF CAREER 1553407, AFOSR YIP, and ARPA-E through the NODES program.
	 Y. Li, G. Qu and N. Li are with John A. Paulson School of Engineering and Applied Sciences, Harvard University, 33 Oxford Street, Cambridge, MA 02138, USA (email: yingyingli@g.harvard.edu, gqu@g.harvard.edu, nali@seas.harvard.edu). A preliminary version of this paper appears in the 2018 American Control Conference. The current version contains a new algorithm with better performance and refined fundamental limit results.}
%\thanks{Manuscript received April 19, 2005; revised December 27, 2012.}
}
%\author{Michael~Shell,~\IEEEmembership{Member,~IEEE,}
   %     John~Doe,~\IEEEmembership{Fellow,~OSA,}
 %       and~Jane~Doe,~\IEEEmembership{Life~Fellow,~IEEE}% <-this % stops a space
%\thanks{M. Shell was with the Department
%of Electrical and Computer Engineering, Georgia Institute of Technology, Atlanta,
%GA, 30332 USA e-mail: (see http://www.michaelshell.org/contact.html).}% <-this % stops a space
%\thanks{J. Doe and J. Doe are with Anonymous University.}% <-this % stops a space
%\thanks{Manuscript received April 19, 2005; revised August 26, 2015.}}

% note the % following the last \IEEEmembership and also \thanks - 
% these prevent an unwanted space from occurring between the last author name
% and the end of the author line. i.e., if you had this:
% 
% \author{....lastname \thanks{...} \thanks{...} }
%                     ^------------^------------^----Do not want these spaces!
%
% a space would be appended to the last name and could cause every name on that
% line to be shifted left slightly. This is one of those "LaTeX things". For
% instance, "\textbf{A} \textbf{B}" will typeset as "A B" not "AB". To get
% "AB" then you have to do: "\textbf{A}\textbf{B}"
% \thanks is no different in this regard, so shield the last } of each \thanks
% that ends a line with a % and do not let a space in before the next \thanks.
% Spaces after \IEEEmembership other than the last one are OK (and needed) as
% you are supposed to have spaces between the names. For what it is worth,
% this is a minor point as most people would not even notice if the said evil
% space somehow managed to creep in.

\maketitle

% As a general rule, do not put math, special symbols or citations
% in the abstract or keywords.
\begin{abstract}
	This paper studies an online optimization problem with a finite prediction window of cost functions and additional switching costs on decisions. {We propose two gradient-based online algorithms:  Receding Horizon Gradient Descent (RHGD), and Receding Horizon Accelerated Gradient (RHAG).} Both algorithms only require a finite number of projected gradient evaluations at each stage.  {We provide upper bounds on the dynamic regrets of the 
	proposed algorithms and show that the regret upper bounds decay exponentially  with the length of the prediction window. Moreover, we study the fundamental lower bound on the dynamic regret for a broad class of deterministic online algorithms. The lower bound is close to RHAG's regret upper bound, indicating that our gradient-based RHAG is a near-optimal online algorithm. Finally, we conduct numerical experiments to complement our theoretical analysis.}
\end{abstract}

\section{Introduction}
%\blue{First write and finalize the main text. In the end, write the introduction before the contribution. The introduction is the most difficult part and also one of  the most important part. }

% {This is a new problem formulation, so the introduction is very important. Also, I should carefully explain how this paper relates with the control community to attract their attention from the control community. I should clarify the possible doubts.}

In many applications of sequential decision-making problems, e.g.  data center scheduling \cite{lin2012online}, smart grids \cite{tanaka2006real}, multi-task machine learning \cite{zenke2017continual}, autonomous driving \cite{rios2016survey}, etc., the system is subject to a time-varying environment and only the near future can be predicted with high accuracy; meanwhile, abrupt and large changes in the decisions  are undesirable. Inspired by this, in this paper, we  consider an online convex optimization problem with a short-term prediction of the cost functions and additional switching costs on the decisions. In particular, at each stage $t$, an agent receives the cost functions for the next $W$ stages, i.e. $f_t, \dots, f_{t+W-1}$, and then makes a decision $x_t$. The agent suffers the stage cost $f_t(x_t)$ and also a switching cost $\frac{\beta}{2}\|x_t-x_{t-1}\|^2$ that penalizes the change in the decision at stage $t$, where $\beta\geq 0$ is a weight parameter.

%the stage cost functions $f_t(\cdot), \dots, f_{t+W-1}(\cdot)$ at the next $W$ stages are  available to a decision maker who will make a decision $x_t$ and then suffer the stage cost $f_t(x_t)$ plus a switching cost $\frac{\beta}{2}\|x_t-x_{t-1}\|^2$ where $\beta\geq 0$ is a weighting parameter.

%Motivated by these  applications, we  consider a variant of online convex optimization \cite{hazan2016introduction} with predictions and switching costs, In particular,  at each stage $t$,  the stage cost functions $f_t(\cdot), \dots, f_{t+W-1}(\cdot)$ in the next $W$ stages are  accurately predicted and revealed to an decision maker, based on which the decision maker takes an action $x_t$ and suffers the current stage cost $f_t(x_t)$ plus a switching cost $\frac{\beta}{2}\|x_t-x_{t-1}\|^2$ that penalizes the changes of the action from the previous stage with a penalty parameter $\beta\geq 0$.

%The problem described above has attracted a lot of attention recently in online optimization, and many algorithms have been proposed in literature, e.g. AFHC CHC. 

This problem is a variant of the classical online convex optimization  (OCO) that does not consider predictions or switching costs \cite{hazan2016introduction}. This variant  has attracted a lot of attention from the OCO community in recent years. Many algorithms have been proposed and their optimality/regret guarantees have been analyzed, e.g. AFHC \cite{lin2012online}, CHC \cite{chen2016using}.   {In addition, though not specially designed for this problem, a classical control algorithm, model predictive control (MPC) \cite{rawlings2012postface},  can  be naturally applied here because MPC considers the receding horizon optimization for the next $W$ stages and the information of the next $W$ stages is available in our setting.} However, most  algorithms in literature, e.g. AFHC, CHC, MPC, require to solve $W$-stage optimization at each stage, which can be time-consuming for large-scale problems.  {Moreover, despite the attempts on relieving the computational burden of MPC, e.g. explicit MPC \cite{alessio2009survey}, inexact MPC \cite{kogel2014stabilization}, suboptimal MPC \cite{wang2010fast,graichen2010stability}, prediction-correction methods \cite{paternain2018prediction}, real-time iteration schemes \cite{diehl2005nominal}, etc., most existing methods either require special structures  of the cost functions or lack optimality/regret guarantees for the online problems with time-varying costs.}

%   the online problem described above shares some similarities with the Economic Model Predictive Control (EMPC) problem with time-varying costs,  which aims to minimize the average of the time-varying economic  costs  by considering the receding horizon optimization  for the next $W$ stages.  EMPC algorithms can  be naturally applied to our online problem. 

%   both the OCO and the control community. 
  
%   in recent years, many OCO algorithms have been proposed and analyzed for this problem, e.g. (cite??). Moreover, notice that the online problem  described above is similar to the Economic Model Predictive Control problems with time-varying costs, and thus  EMPC algorithms can be applied here as well. 
  
%   have witnessed a growing interest on this problem from the OCO community and many online optimization algorithms have been proposed, e.g. AFHC, CHC, etc. (cite??). 
  
%   %the problem has attracted a lot of attention  in recent years and several online optimization algorithms have been proposed, e.g. AFHC, CHC, etc. (cite??). 

% In addition, the online problem described above is closely related with the Economic Model Predictive Control problems with time-varying costs, and thus  EMPC algorithms can be applied here as well. 

Further,  questions regarding the fundamental limit of this problem are relatively under-explored in the literature.  {In this paper, we are mostly interested in the following question. \\
\textit{Q: what is achievable and what is not by online algorithms that have a finite  window of the cost predictions? }}

\nbf{Contributions:}
This paper  considers strongly convex and smooth cost functions and  studies the performance by \textit{dynamic regret}, which is the online  algorithm's cost minus the optimal cost in hindsight \cite{mokhtari2016online}. Our results are summarized below.

% {need revision, repeated meaning}

Firstly, we design gradient-based online algorithms, RHGD and RHAG. Our algorithms  adopt classical OCO algorithms (without predictions), e.g. online gradient descent (OGD) \cite{hazan2016introduction}, as an initialization oracle, and then performs gradient updates on the initialization based on the cost predictions. With OGD initialization, our algorithms only require the calculation of $W+1$ gradients. Moreover, we show theoretically that our algorithms reduce the regret of the initialization oracle, which can be any classical OCO algorithm, exponentially with the prediction window's length $W$.  We also provide some stability guarantees  of our algorithms under certain conditions. 

 In addition, we analyze the fundamental lower bounds  on the dynamic regrets of any online algorithms, which is based on the worst-case analysis and imposes  no computational constraints on  the admissible algorithms. Surprisingly, the regret upper bound of RHAG is close to the fundamental lower bound, indicating that, at least in the worst cases, our gradient-based RHAG algorithm is near-optimal even when compared with more computationally intensive algorithms. 
 
 %, at least in the worst cases, the regret performance of our gradient-based RHAG is comparable with  the online algorithms employing more computational power. 

 %in terms of the dependence on $W$ and  the path length under some conditions.
 % {}
 
%  {to revise}

Finally, we  conduct numerical experiments to complement our theoretical analysis by comparing our algorithms with MPC and discuss  the implications of the numerical results.

 \nbf{Related work:} We provide a brief literature review below.
 
%  {To revise: do not make it too strong, MPC also cares about optimality, mention MPC and EMPC with time-varying costs, and mention optimality and transient optimality. Then, we don't need to explain why our optimal control is not EMPC.}

\noindent {\nit{\underline{(Economic) model predictive control.}} The problem considered in this paper is  related with the (economic) model predictive control ((E)MPC) with time-varying costs \cite{ellis2014economic,ferramosca2014economic,angeli2016theoretical,alessandretti2016convergence,grune2018economic,zavala2010real,zanon2013lyapunov}, where EMPC is a variant of MPC that focuses on minimizing the economic costs (see e.g. \cite{angeli2016theoretical}).  In addition, our dynamic regret analysis is  related with the optimality performance analysis of (E)MPC that  studies how  MPC's cost  deviates from the  optimal one \cite{ferramosca2014economic,angeli2016theoretical,alessandretti2016convergence,grune2018economic}. However, the optimality performance of the fast (E)MPC  schemes (see e.g. \cite{alessio2009survey,kogel2014stabilization,wang2010fast,graichen2010stability,paternain2018prediction,diehl2005nominal}) is relatively under-explored, especially for the time-varying cases.} %since most literature focuses on the numerical and stability performance of 

 {It is worth mentioning that MPC can be applied to much more general problems, e.g. dynamical control problems,  systems with constraints, distributed systems, etc. (see \cite{rawlings2012postface} for more details), while this paper only considers OCO with switching costs. Nevertheless, this paper may lay  foundation   for more general problems, e.g. the online optimal control with a linear time-invariant system as in \cite{li2019online}. }

 \nit{\underline{Online convex optimization.}} For classical OCO, we refer the reader to \cite{hazan2016introduction}. The OCO with switching costs  has been studied in \cite{lin2012online,chen2016using,goel2019online}. The OCO with predictions has been studied for  the case without switching costs  where \textit{one-stage inaccurate predictions} are considered (see e.g.\cite{rakhlin2013online}), and for the case with switching costs and \textit{multi-stage predictions} which are either accurate \cite{lin2012online} or inaccurate  \cite{chen2016using}. This paper is mostly related with the  work on the $W$-stage accurate prediction \cite{lin2012online}. 
 %However,  there lacks fundamental limit analysis and gradient-based algorithm design in literature. 

% Common performance metrics include regret \cite{hazan2016introduction,mokhtari2016online} and competitive ratio \cite{andrew2013tale}.
 
 Various performance metrics are considered in  OCO, e.g. static regret \cite{hazan2016introduction}, dynamic regret \cite{mokhtari2016online,besbes2015non}, competitive ratio \cite{andrew2013tale}, etc. The static regret refers to the difference between the online algorithm's cost and the cost generated by an optimal \textit{static} decision in hindsight. There are algorithms with sublinear $o(T)$ static regret bounds \cite{hazan2016introduction}. However, with large fluctuations in the environment, it is also reasonable to compare with the  cost of the  possibly \textit{time-varying} optimal decisions in hindsight, as considered in the dynamic regret and the competitive ratio. This paper will focus on the dynamic regret and leave the competitive ratio analysis as the future work.  Notice that it is usually impossible to achieve a sublinear $o(T)$ dynamic regret bound for all time-varying environments and most dynamic regret bounds in literature depend on the variation of the environment   \cite{besbes2015non,mokhtari2016online}. 

\noindent  {\nit{\underline{Time-varying optimization (TVO).}} It considers  $\min_x f(x;t)$ for each $t$. For theoretical purposes, most papers on TVO assume that the  cost function $f(x;t)$ does not change dramatically with time $t$, e.g. $f(x;t)$ has certain smoothness  properties with respect to  $t$ \cite{simonetto2016class,tang2018running,zavala2010real}, which are not assumed in this paper. It is also worth mentioning the prediction-correction  method \cite{simonetto2016class}, which computes the predictions of the future costs based on the smoothness of  $f(x;t)$ with  $t$; while the predictions in this paper are not computed by our algorithms but are given by some outside sources. It is   our future work to consider designing algorithms to generate predictions.}

\nbf{Notations:}
  $\|\cdot\|$ denotes the $l_2$ norm. $\Pi_{\mathbb X}(x)$ denotes the projection of $x$ onto set $\mathbb X$.  For  function $f(x,y)$ on $(x,y)\in \mathbb R^{m+n}$, let $\nabla f(x,y)\in \mathbb R^{m+n}$ be the gradient and $\frac{\partial f}{\partial x}(x,y)\in \mathbb R^m$ be the partial gradient with  $x$.  {For integers $a, b, c$, we write $a\equiv b \pmod c$ if $a=b+kc$ for some integer $k$.} $|\mathbb J|$ denotes the cardinality of the set $\mathbb J$. $\mathbf A^\top $ denotes the matrix $\mathbf A$'s transpose.  {For $x>0$ in $\R$, 
we write  $f(x)=O(g(x))$ ($f(x)=\Omega(g(x))$)  if there exists a constant $M$ such that $|f(x) | \leq M g(x)$ ($|f(x) | \geq M g(x)$) for  $x \geq M$;  and we write $f(x)=o(g(x))$  if $\lim_{x\to +\infty} f(x)/g(x)=0$.} $\mathbf 1_n \in \R^n$ is an all-one vector. $\bm I_n$ is an identity matrix in $\R^{n \times n}$.

\section{Problem Formulation}
This paper considers an online convex optimization (OCO) problem in $T$ stages with stage cost function $f_t(\cdot)$ and  quadratic switching cost $\frac{\beta}{2}\|x_t-x_{t-1}\|^2$ to penalize the changes in the decisions. %\footnote{The analysis can be extended to other switching cost functions with properties  such as  convexity, and smoothness.} 
Formally, we aim to solve %the following online convex optimization problem:
\begin{equation}\label{equ: soco}
\begin{aligned}
\min_{x_1, \dots, x_T \in \mathbb X} \mathsf{C_T}(\bm x)=\sum_{t=1}^T \left(f_t(x_t)+ \frac{\beta}{2}\|x_t-x_{t-1}\|^2\right),
\end{aligned}
\end{equation}
where $\mathbb X\subseteq \R^n$ is a convex feasible set, $\bm x=(x_1, \dots, x_T)$, $x_0\in \mathbb X$ is given, $\beta\geq 0$ is a  penalty parameter. %Let $\bm x^*$ denote optimal solution to \eqref{equ: soco}.
%The SOCO problem  enjoys many applications \cite{lin2012online,lin2013dynamic}.  

To solve \eqref{equ: soco}, all   cost functions $f_1,\dots, f_T$ have to be known a priori, which is not practical in many applications \cite{lin2012online}. Nevertheless, there are usually some predictions available, especially for the near future. This paper adopts a simple  model to characterize the predictions: at each stage $t$, the decision maker receives the cost functions for the next $W$ stages $f_t, \dots, f_{t+W-1}$,\footnote{ {Predicting the complete  function can be challenging, but it simplifies the analysis and it is often practical when the cost functions are parametric \cite{lin2012online}.}} but do not know the cost functions  beyond the next $W$ stages, that is, $f_{t+W}, f_{t+W+1},\dots$ may be arbitrary or even adversarial.  Though our prediction model is too optimistic in the near future but too pessimistic in the far future, our model captures a commonly observed property  in  applications, i.e. the short-range predictions  are usually much more accurate than the long-range predictions. In addition, our model simplifies the theoretical analysis and helps generate insightful results that may lay foundation for future work on more realistic settings, e.g.  noisy and/or partial predictions. %In addition, this model has been considered in literature to serve as an initial step to more general settings \cite{chen2016using}. %, and some  results on this simplified model have been extended to more general settings (cite??). 

In summary, the online  problem considered in this paper is outlined below. At each stage $t=1,2, \dots, T$, an agent
\begin{itemize}
	\item  receives the predicted cost functions $f_t(\cdot), \dots, f_{t+W-1}(\cdot)$
	\item computes a stage decision $x_t$ by history and predictions% $f_1(\cdot), \dots, f_{t+W-1}(\cdot)$,
	\item suffers the cost $f_t(x_t)+ \frac{\beta}{2}\|x_t-x_{t-1}\|^2$.
\end{itemize}

The online information available at each stage $t$ contains the predicted cost functions as well as the history cost functions, i.e. $\{f_1, \dots,  f_{t+W-1}\}$.
Our goal is to design an online algorithm $\A$ that computes the stage decision $x_t^{\A}$ by only using the online information at  stage $t$ to minimize the total cost $\mathsf{C_T}(\bm x^{\A})$. We measure the algorithm performance by \textit{dynamic regret} \cite{mokhtari2016online}, which compares the online algorithm's cost with the optimal cost in hindsight:
\begin{equation}\label{equ: dynamic regret def}
    \textrm{Reg}(\A)=\mathsf{C_T}(\bm x^{\A})- \mathsf{C_T}(\bm x^*)
\end{equation}
where $\bm x^*$ denotes the optimal solution to \eqref{equ: soco} in hindsight.

To ease the theoretical analysis, we list a few assumptions on the cost functions $\{f_t\}$ and the feasible set $\mathbb X$.
\begin{assumption}\label{ass:1}   {Cost function $f_t(\cdot)$ is $\alpha_t$-strongly convex and $l_t$-smooth  in $\R^n$,\footnote{ {Here we consider $\R^n$ because we will use Nesterov's accelerated gradient that requires strong convexity and smoothness  outside the feasible set $\mathbb X$ \cite{nesterov2013introductory}.}} i.e. for any $x, y \in \R^n$, we have
$f_t(x)+\langle\nabla f_t(x) ,y-x\rangle +\frac{\alpha_t}{2}\|y-x\|^2\leq f_t(y)\leq f_t(x)+\langle\nabla f_t(x) ,y-x\rangle +\frac{l_t}{2}\|y-x\|^2 $.}

 {In addition, 
		there exist constants $\alpha,l>0$ that do not depend on $T$ such that  $\alpha_t \geq \alpha$ and $l_t \leq l$ for all $1\leq t \leq T$.}
		
% 		, i.e. for any $t$ and any $x, y \in \R^n$, we have 
% 		$$f_t(y)\geq f_t(x)+\langle\nabla f_t(x) ,y-x\rangle +\frac{\alpha}{2}\|y-x\|^2$$
% 		$$f_t(y)\leq f_t(x)+\langle\nabla f_t(x) ,y-x\rangle +\frac{l}{2}\|y-x\|^2 $$
		
% 		$f_t(\cdot)$ satisfies the following properties for any $1\leq t \leq T$.
% 		\begin{enumerate}
% 			\item[i)] Strong convexity:
% 			$$f_t(y)\geq f_t(x)+\langle\nabla f_t(x) ,y-x\rangle +\frac{\alpha}{2}\|y-x\|^2 , \ \forall\, x,y \in \R^n.\footnote{ {Here we consider $\R^n$ because Nesterov's accelerated gradient requires strong convexity and smoothness beyond outside the feasible set $\mathbb X$ \cite{nesterov2013introductory}.}}$$
% 			\item[ii)] Smoothness: 
% 			$$f_t(y)\leq f_t(x)+\langle\nabla f_t(x) ,y-x\rangle +\frac{l}{2}\|y-x\|^2 , \ \forall\, x,y \in \R^n.$$
% 			%\item[iii)] Bounded gradient on $\mathbb X$:
% 		%	$\| \nabla f_t(x)\|\leq G, \ \forall\, x\in \mathbb X.$
% 		\end{enumerate}
% 		%	\begin{align*}
		%	&\text{$\alpha$-strong convexity:} \\
		%	&\quad	f(y)\geq f(x)+\langle\nabla f(x) ,y-x\rangle +\frac{\alpha}{2}\|y-x\|^2 , \ \forall\, x,y \in \R^n\\
		%	&\text{$l$-smoothness:}\\
		%	& \quad  \|\nabla f(y)-\nabla f(x)\|\leq l\|y-x\|, \quad  \, \forall\, x,y \in \R^n\\
		%	& \text{Bounded gradient:} \quad \quad \| \nabla f(x)\|\leq G, \quad \forall\, x\in X
		%	\end{align*}
		%We denote the class of these functions as $ \mathcal F_X(\alpha,l,G)$. 
	\end{assumption}
	\begin{assumption}\label{ass: bdd gradient}
	         {There exists $G>0$ such that 	$\| \nabla f_t(x)\|\leq G$ for any $x\in \mathbb X$ and any $1\leq t\leq T$.}
	\end{assumption}
	\begin{assumption}\label{ass:2}
	  {$\mathbb X$ is compact with  $D\coloneqq\max_{x,y\in \mathbb X}\|x-y\|$.}
\end{assumption}
 {We assume Assumption \ref{ass:1} holds throughout the paper. We will explicitly state it when Assumption \ref{ass: bdd gradient} and \ref{ass:2} are needed.}

% {Notice that $\alpha$ in Assumption \ref{ass:1} is a uniform lower bound on the strong convexity factors of $f_1, \dots, f_T$. Similarly,  $l$ and $G$ are uniform upper bounds on the corresponding factors. }%($\mu$) can be viewed as the uniform upper (lower) bounds. 

% {It left as future work to relax the assumptions.}

Finally, we provide two examples for our problem above.

%where our problem formulation can be applied. 

% {Q: what's $\mathbb X$ in both cases?}

\begin{example}[Trajectory Tracking] Consider a dynamical system $x_{t+1}=x_{t}+u_{t}$, where $x_t$ is the robot's location,  $u_t$ is the robot's velocity. Let $y_t$ be the target's location. The optimal control problem with tracking error $f_t(x_t)=\frac{1}{2} \|x_t - y_t\|^2$ and  control cost $\frac{\beta}{2}\|u_t\|^2=\frac{\beta}{2}\|x_{t+1}-x_{t}\|^2$ can be formulated as % The objective is to minimize the sum of the tracking error and the energy loss,
	% so this is not identical to our model, because switching cost is from x_{t+1} to x_t, does this make a big difference??
	$$
	\min_{x_t \in \mathbb X} \sum_{t=0}^{T-1} \left(f_t(x_t)+\frac{\beta}{2}\|x_{t+1}-x_{t}\|^2\right)+f_T(x_T),$$
	where $\mathbb X$ denotes a feasible set. 
	In reality,  a short lookahead window  is sometimes available for the target trajectory \cite{rosales1998improved}. 
	\end{example}
	
	\begin{example}[ {Smoothed regression}]\label{example: regression} 
		 {Consider a learner who aims to solve  a sequence of  regression  tasks %where the for sequential tasks 
		without changing the regressors too much between stages (see e.g. \cite{goel2019online,zenke2017continual}). The problem can be modeled as \eqref{equ: soco} where $f_t(\cdot)$ represents the  regression loss function, $x_t$ is the regressor at stage $t$, and $\beta$ is the parameter for the smoothing regularization. In some cases,  a short lookahead window of future tasks are available, for example,  when multiple tasks arrive at the same time but are solved sequentially (see e.g. \cite{pentina2015curriculum}).}
	\end{example}

\section{Online Algorithm Design}
This section presents our online algorithms RHGD and RHAG, which are inspired by the offline optimization and offline gradient methods, i.e. gradient descent and Nesterov's accelerated gradient \cite{nesterov2013introductory} respectively.

%We first discuss the insights from the offline optimization and offline gradient descent, based on which we design our RHGD.  In addition,  we present RHAG which is based on  Nesterov's accelerated gradient \cite{nesterov2013introductory}. 
%In this section, we first introduce RHGD which is based on vanilla gradient descent, then RHAG is similar by applying the design idea to Nesterov's accelerated gradient \cite{nesterov2013introductory}. % Finally, we prove the dynamic regret upper bounds and the stability.

\subsection{Offline Optimization and Offline Gradient Descent}

%$\vec{\mathbb X}$

Given all cost functions $f_1,\ldots, f_T$, the problem \eqref{equ: soco} becomes a classical optimization problem and can be solved  by, e.g.,  projected gradient descent (GD). The updating rule of GD is the following.   For iteration $k=1, 2, \dots$,
\begin{equation}\label{equ: gd x}
\bm x{(k)}=\Pi_{\mathbb X\times \dots \times \mathbb X}\left(\bm x{(k-1)}-\eta \nabla \mathsf{C_T}(\bm x{(k-1)})\right),
\end{equation}
where $\mathbb X\times \dots \times \mathbb X$ denotes the joint feasible set of $\bm x$,  $\eta$ is the stepsize, the initial value $\bm x(0)$ is given. 
The gradient $\nabla \mathsf{C_T}$ can be evaluated by the partial gradient on each variable $x_t$:
%The partial gradient of $\mathsf{C_T}$ with respect to stage decision variable $x_t$ and can be evaluated by
$$\frac{\partial \mathsf{C_T}}{\partial x_t}(\bm x) = \nabla f_t(x_t)+ \beta(x_t-x_{t-1})+ \beta(x_t-x_{t+1})$$
when $1\leq t \leq T-1$, and $\frac{\partial \mathsf{C_T}}{\partial x_T}(\bm x) = \nabla f_T(x_T)+ \beta(x_T-x_{T-1})$ at stage $T$.
 {Notice that the partial gradient $\frac{\partial \mathsf{C_T}}{\partial x_t}(\bm x)$ only depends on the cost function $f_t$ and the stage variables $x_{t-1}, x_t, x_{t+1}$. To emphasize this fact, we slightly abuse the notation and write the partial gradient as $\frac{\partial \mathsf{C_T}}{\partial x_t}(x_{t-1:t+1})$. With this notation, the projected gradient descent \eqref{equ: gd x} can be written equivalently as follows. For iteration $k=1,2,\dots,$ the updating rule of GD on the stage  variable $x_t$ for $1\leq t \leq T$ is
\begin{equation}\label{equ: gd on xt}
x_t(k)=\Pi_{\mathbb X} \left[ x_t(k-1)-\eta \frac{\partial \mathsf{C_T}}{\partial x_t}(x_{t-1:t+1}(k-1))\right]
\end{equation}}
  {Rule \eqref{equ: gd on xt} shows that, to compute $x_t(k)$ by the offline gradient descent, we only need $f_t$ and $x_{t-1}(k-1), x_t(k-1), x_{t+1}(k-1)$, instead of all the cost functions and all the stage variables. This suggests that it is still possible to implement \eqref{equ: gd on xt} for a few iterations using only the finite lookahead window of cost functions. This is the key insight that motivates our online algorithm design below.}

\subsection{Receding Horizon Gradient Descent (RHGD)}
Inspired by the offline gradient descent, we  design our online  RHGD  (see  Algorithm \ref{alg:RHGD}).  {For ease of notation, we define $f_t(\cdot):=0$ for $t\leq 0$ or $t>T$ and let $x_t(k):=x_0$ for $t\leq 0$ and   $k\geq 0$ when necessary. At stage $t=1-W$, RHGD sets $x_1(0)=x_0$. At  stage $2-W\leq t\leq T$,  RHGD receives $f_t,\dots, f_{t+W-1}$ and runs the following two steps.}

 {\nbf{In Step 1,} RHGD initializes the  variable    $x_{t+W}(0)$ with an initialization method $\phi$. Notice that $\phi$ can be any  method that only uses the available information at $t$, i.e. $f_1,\dots, f_{t+W-1}$ and the stage variables computed before $t$. For instance, $\phi$ can be online gradient descent (OGD), which is a well-known  OCO algorithm in literature \cite{hazan2016introduction} and is provided below.
\begin{equation}\label{equ: ogd}
 x_{t+W}(0)= \Pi_{\mathbb X}
 \left[ x_{t+W-1}(0)- \gamma \nabla f_{t+W-1}(x_{t+W-1}(0))\right],
\end{equation}where $\gamma>0$ is the stepsize and $x_{t+W-1}(0)$ is available from the Step 1  at the previous stage $t-1$.}

% Since $f_{t+W}$ is unknown at stage $t$,  we adopt a well-known OCO algorithm in literature: online gradient descent (OGD) \cite{hazan2016introduction}:%the initialization scheme should not use $f_{t+W}$. One possible initialization scheme is  online gradient descent (OGD),  which is a widely used OCO algorithm in  literature \cite{hazan2016introduction}.  OGD is provided below.
% %Since $f_{t+W}$ is unknown, $x_{t+W}(0)$ can be initialized by OCO algorithms, e.g. online gradient descent (OGD),  which is a widely used online algorithm in  literature \cite{hazan2016introduction}:
% \begin{equation}\label{equ: ogd}
%  x_{t+W}(0)= \Pi_{\mathbb X}
%  \left[ x_{t+W-1}(0)- \gamma \nabla f_{t+W-1}(x_{t+W-1}(0))\right],
% \end{equation}
% where $\gamma>0$ is the stepsize and $x_{t+W-1}(0)$ is available from the Step 1  at the previous stage $t-1$. We mention  that the  OGD initialization scheme %is not restricted to OGD  and 
% can be replaced by other  schemes that only require the available online  information.

\begin{algorithm}\caption{Receding Horizon Gradient Descent (RHGD)}
	\label{alg:RHGD}
	\begin{algorithmic}
		\STATE \textbf{Inputs:} $x_0$, $\mathbb X$, $\beta$, $W$, stepsizes $\gamma,\eta$
		\STATE Let $x_1(0)=x_0$. 
		%	\STATE For ease of notation, let $x_{t}(k)= x_0$ for $t\leq 0$ and $k\geq 0$, and $f_t(\cdot)=0$ for $t\leq 0$ and $t\geq T+1$.
		%	\STATE \textbf{Update}:
		\FOR{$t=2-W$ \textbf{to} $T$}
		\STATE  \texttt{Step 1: initialize  $x_{t+W}$ by OGD. } 
		
		%	\IF{$t+W\leq T$}
		
		\STATE$
		x_{t+W}(0)= \Pi_{\mathbb X}\left[x_{t+W-1}(0)-{\gamma}\nabla f_{t+W-1}(x_{t+W-1}(0)) \right]
		$
		%	\ENDIF
		
		\STATE  \texttt{Step 2: update  $x_{t+W-1},x_{t+W-2},\dots, x_t$.}
		\FOR{$s=t+W-1$ \textbf{downto} $t$}
		\STATE compute $x_s(k)$ by projected gradient descent \eqref{equ: gd on xt}, 
		\begin{align*}
		&x_s(k)= \Pi_{\mathbb X}\left[x_s(k-1)-\eta  \frac{\partial \mathsf{C_T}}{\partial x_s}(x_{s-1:s+1}(k-1))\right]
		\end{align*}
		where $k=t+W-s$.
		
		\ENDFOR
		\STATE \textbf{Output:} $x_t(W)$.
		\ENDFOR

	\end{algorithmic}
\end{algorithm}

 {\nbf{In Step 2,} RHGD updates the values of $x_{t+W-1}, x_{t+W-2}, $ $\dots, x_t$ one by one by  \eqref{equ: gd on xt}. In the following, we show that Step 2 computes the exact values of $x_{t+W-1}(1),$ $ \dots, x_t(W)$ defined in the offline gradient descent \eqref{equ: gd on xt} for the offine optimization \eqref{equ: soco} by only using the available online information at $t$.} %and computes the exact values of the variables $x_{t+W-1}(1), \dots, x_t(W)$ in the offline gradient descent iterations \eqref{equ: gd x}. 
 {\begin{itemize}[label={--}]
    \item At first, RHGD  computes $x_{t+W-1}(1)$ exactly by \eqref{equ: gd on xt} since $f_{t+W-1}$ is received, $x_{t+W}(0)$ is computed in Step 1,  $x_{t+W-1}(0)$ and $x_{t+W-2}(0)$ have been computed in Step 1 of the stages $t-1$ and  $t-2$ respectively.
    \item Next, RHGD   computes $x_{t+W-2}(2)$ exactly by \eqref{equ: gd on xt} since $f_{t+W-2}$ is received, $x_{t+W-1}(1)$ is computed above,  $x_{t+W-2}(1)$ and $x_{t+W-3}(1)$ have been computed in Step 2 of  the stages $t-1$ and $t-2$ respectively.
    \item Similarly, RHGD computes $x_{t+W-3}(3),$ $ \dots, x_{t}(W)$ one by one  based on the received  costs  and the values that have been computed at stages $t,t-1,t-2$. %   at the current stage and the previous two stages.
\end{itemize}
%At the beginning, RHGD can compute the value of $x_{t+W-1}(1)$ by \eqref{equ: gd on xt} because $f_{t+W-1}$ is received, $x_{t+W}(0)$ is computed in Step 1, and $x_{t+W-1}(0)$ and $x_{t+W-2}(0)$ have been computed in Step 1 of stage $t-1$ and $t-2$ respectively. Next, RHGD  computes $x_{t+W-2}(2)$ by \eqref{equ: gd on xt} because $f_{t+W-2}$ is received, $x_{t+W-1}(1)$ is computed above, and $x_{t+W-2}(1)$ and $x_{t+W-3}(1)$ have been computed in Step 2 of stage $t-1$ and $t-2$ respectively. Following the same logic, RHGD computes $x_{t+W-3}(3), \dots, x_{t}(W)$ one by one by using the received cost functions  and the variables computed at the current stage and the previous two stages. 
The final output at stage $t$ is $x_t(W)$, which is the same as that in the $W$th iteration of  the offline gradient descent \eqref{equ: gd on xt} for $\mathsf{C_T}$.}

 %for the total cost function $\mathsf{C_T}$  to update $x_{t+W-1}, x_{t+W-2}, \dots, x_t$ by  using only the $W$-lookahead information. This is implemented by (i) first updating $x_{t+W-1}(1)$ according to \eqref{equ: gd on xt}, which utilizes the predicted cost function $f_{t+W-1}$, the  initial  value $x_{t+W}(0)$ computed in Step 1, and the initial values $x_{t+W-1}(0)$ and $x_{t+W-2}(0)$ computed in previous stages; (ii) then updating $x_{t+W-2}(2)$ by \eqref{equ: gd on xt}, which utilizes the predicted cost $f_{t+W-2}$,  the  value $x_{t+W-1}(1)$ computed in (i) above, and $ x_{t+W-2}(1), x_{t+W-3}$ computed at previous stages; and so on. Eventually, we are able to compute and output $x_t(W)$. Notice that $x_t(W)$  is also the $W$th update of the offline gradient descent \eqref{equ: gd on xt} on $x_t$ for the total cost function $\mathsf{C_T}(\cdot)$. 

%It can verified that RHGD only uses the online information available at stage $t$, thus being an admissible online algorithm.  

%RHGD is summarized in Algorithm \ref{alg:RHGD}.  For ease of notation, we let $x_t(k)=x_0$ for $t\leq 0$ and $f_t(\cdot)=0$ for $t\leq 0$ or $t>T$. 

%It is straightforward  that the outputs of RHGD at each stage $x_1(W),\dots, x_T(W)$ constitute the output of projected gradient descent \eqref{equ: gd x} after $W$ iterations given the same initial values $x_1(0), \dots, x_T(0)$.

 {Notice that RHGD (with OGD  initialization) only requires $W+1$ projected gradient evaluations at each stage.} Therefore, our RHGD is computationally efficient when the projection onto $\mathbb X$ can be evaluated efficiently, e.g. when $\mathbb X$ is a positive orthant,
an n-dimensional box, a probability simplex, a Euclidean ball,
etc.
%, our RHGD at each stage is  computationally efficient.

%The computation required at each stage $t$ is mostly the $W$ projected gradient updates in Step 2 and the computation required by the initialization scheme in Step 1. If applying online gradient descent as initialization, the total number of projected gradient evaluations will be $W+1$. The projection onto set $X$ admits fast algorithms when set $X$ is e.g. a positive orthant,
%an n-dimensional box, a probability simplex, a Euclidean ball,
%etc. \blue{Thus, when the projection onto $X$ and gradient evaluations can be computed efficiently, our RHGD requires limited computation time per stage.}

%When the set X is simple, such as a positive orthant,
%an n-dimensional box, a probability simplex, a Euclidean ball,
%etc, the projection onto X admits fast algorithms. In this case, 

 %When the projection onto $X$ and gradient evaluations can be computed efficiently, our RHGD requires limited computation time per stage.

\begin{example}[Illustrative example]\label{example: illustration}

% {To revise}
		\begin{figure}[htpb]
		\centering
		\includegraphics[width=0.6\textwidth]{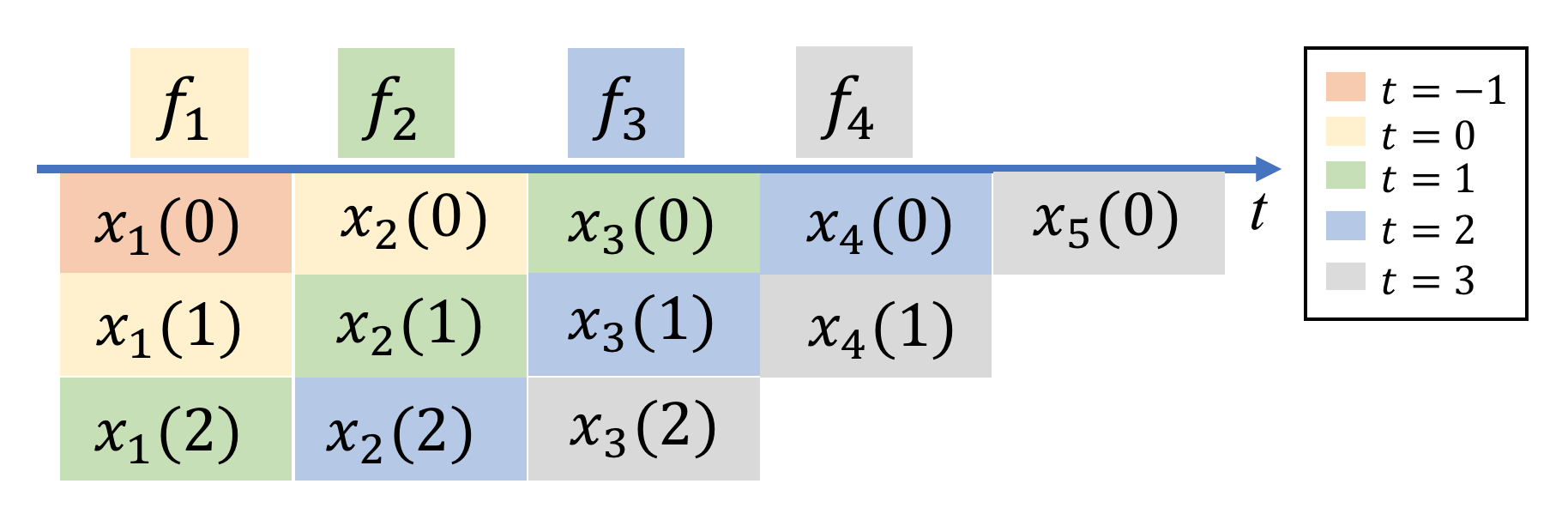}
			\caption{An example when $W=2$. The new cost function received and the new variables computed by RHGD  at stage $t=-1,0, 1,2,3$ are  marked in different colors.}% in yellow, green, blue boxes respectively. The orange box marks the computation before the problem starts.}
		\label{fig: illustration}
	\end{figure}
{Figure \ref{fig: illustration} provides an illustrative example for RHGD when $W=2$. Define $x_t(k)=x_0$ for $t\leq 0$, $k\geq 0$; $f_t=0$ for $t\leq 0$. At $t=-1$,  set $x_1(0)=x_0$. %At $t\geq 2-W=0$, the procedures of RHGD are discussed below.

%At $t=-1$, we let $x_1(0)=x_0$. % and its connection with the offline gradient descent. %For ease of notation,  we define $x_0(0)=x_0(1)=x_0$, $f_0(\cdot)=0$. 

%At $t=-1$, we let $x_1(0)=x_0$. 
\begin{itemize}
    \item At  $t=0$, RHGD receives $f_0$ and $f_1$. Step 1  initializes $x_2(0)$ by OGD \eqref{equ: ogd} with $f_1$ and  $x_1(0)$. Step 2 computes $x_1(1)$ by GD \eqref{equ: gd on xt} with $f_1$ and  $x_0(0), x_1(0), x_2(0)$; and then computes $x_0(2)$ for ease of notation, which is omitted in Fig. \ref{fig: illustration}.
    
    %For ease of notation, RHGD   updates $x_0(2)$, which is omitted in Fig. \ref{fig: illustration}.

\item At $t=1$, RHGD receives $f_1, f_2$. Step 1 initializes $x_3(0)$ by   \eqref{equ: ogd} with  $f_2$ and  $x_2(0)$. Step 2 first computes $x_2(1)$ by   \eqref{equ: gd on xt} with  $f_2$ and  $x_1(0), x_2(0), x_3(0)$ and then computes $x_1(2)$ by \eqref{equ: gd on xt} with  $f_1$ and  $x_0(1), x_1(1), x_2(1)$. RHGD  outputs $x_1(2)$. 

%At
% $t=2$, RHGD receives $f_2, f_3$, initializes $x_4(0)$ by \eqref{equ: ogd} with $f_3$, $x_3(0)$ in Step 1; computes $x_3(1)$ by \eqref{equ: gd on xt} with $f_3, x_4(0), x_3(0), x_2(0)$; then computes $x_2(2)$ by \eqref{equ: gd on xt} with $f_2, x_3(1),x_2(1), x_1(1)$; and outputs $x_2(2)$. 
 
 \item Similarly, at $t=2$, RHGD  receives $f_2, f_3$, initializes $x_4(0)$, computes $x_3(1)$  and then $x_2(2)$ by \eqref{equ: gd on xt}, and outputs $x_2(2)$. 
 \item Similarly, at $t=3$, RHGD initializes $x_5(0)$, computes $x_4(1)$ and then $x_3(2)$, and outputs $x_3(2)$. So on and so forth.%not necessary  and only included for ease of notation, thus omitted in Fig. \ref{fig: illustration}.
 
\end{itemize}
}
\end{example}

\vspace{-20pt}

\subsection{Receding Horizon Accelerated Gradient (RHAG)}

The design idea of RHGD can be extended to other gradient methods, e.g. Nesterov's accelerated gradient (NAG), triple momentum,  etc. Due to space limits, we only introduce RHAG (Algorithm \ref{alg:RHAG}) based on NAG. NAG updates $x_t(k)$ and  auxiliary variable $y_t(k)$ for $1\leq t \leq T$ at iteration $k$ by
\begin{equation}\label{alg:nag}
\begin{aligned}
& x_{t}{(k)}=\Pi_{\mathbb X}\left(y_t{(k-1)}-\eta \frac{\partial \mathsf {C_T}}{\partial x_t}(y_{t-1:t+1}(k-1)) \right),\\
& y_t{(k)}=(1+\lambda)x_t{(k)}-\lambda x_t{(k-1)}.
\end{aligned}
\end{equation}
% where the auxiliary variable $y$ provides momentum to accelerate the convergence, and $\eta$ and $\lambda$ are stepsizes.

 {Similar to RHGD, RHAG also conducts two steps at each  $t$. The differences are:  in Step 1, RHAG initializes not only $x_{t+W}(0)$ but also  $y_{t+W}(0)$; and in Step 2, RHAG updates $x_{t+W-1}(1),\dots, x_t(W)$ by  NAG \eqref{alg:nag}  instead of the  gradient descent. Nevertheless, RHAG still outputs $x_t(W)$, which is the value of $x_t$ after $W$th iterations of NAG.}  {In total, RHAG also only requires $W+1$ projected gradient evaluations.} %, which can be computed in a short time when the projection onto $X$ and the gradient evaluations can be computed efficiently.}

\begin{algorithm}\caption{Receding Horizon Accelerated Gradient (RHAG)}
		\label{alg:RHAG}
	\begin{algorithmic}
		\STATE \textbf{Inputs:} $x_0$, $\mathbb X$, $\beta$, $W$, stepsizes $\gamma, \eta, \lambda$.
		\STATE Let $x_1(0)=y_1(0)=x_0$.
		%	\STATE For ease of notation, let $x_{t}(k)=y_t(k)= x_0$ for $t\leq 0$ and $k\geq 0$, and $f_t(\cdot)=0$ for $t\leq 0$ and $t\geq T+1$.
%		\STATE Let $\eta = 1/L$, $\lambda= \frac{1-\sqrt {\alpha\eta}}{1+\sqrt{ \alpha\eta}}$
		%	\STATE \textbf{Update}:
		\FOR{$t=2-W$ \textbf{to} $T$}
		\STATE  \texttt{Step 1: initialize  $x_{t+W}, y_{t+W}$ by OGD.} 
	
		\STATE 	 $
		x_{t+W}(0)= \Pi_{\mathbb X}\left[x_{t+W-1}(0)-{\gamma}\nabla f_{t+W-1}(x_{t+W-1}(0)) \right]
		$ 
		\STATE Let $y_{t+W}(0)=x_{t+W}(0)$.

		\STATE  \texttt{Step 2: update $(x_{t+W-1}, y_{t+W-1}), \dots, (x_t, y_t)$.}
		\FOR{$s=t+W-1$ \textbf{downto} $ t$}
	\STATE	compute $(x_s(t+W-s), y_s(t+W-s))$ by NAG
		\begin{align*}
		&x_s(k)= \Pi_{\mathbb X}\left[y_s(k-1)-\eta \frac{\partial \mathsf{C_T}}{\partial x_s}(y_{s-1:s+1}(k))\right]\\
		&y_s(k)=(1+\lambda)x_s(k)-\lambda x_s(k-1)
		\end{align*}
		where $k=t+W-s$.
		\ENDFOR
		\STATE \textbf{Ouput}:  $x_t(W)$.
		\ENDFOR
		
	\end{algorithmic}
\end{algorithm}

\begin{remark}
	 {We note that RHGD and RHAG do not use the  latest information. To see this, consider Example \ref{example: illustration}. RHGD updates $x_3(1)$ by using $x_2(0)$  though $x_2(1)$ is available  (the same applies to RHAG). This allows RHGD (RHAG) to exactly implement offline GD (NAG) for $\mathsf{C_T}(\bm x)$, which greatly simplifies the  analysis and maintains the convergence rates of GD (NAG). Numerical results in our online report \cite{li2018online} show that RHGD and its variant that uses the latest information perform similarly, while RHAG  outperforms its variant, which is likely due to the sensitivity of NAG to disturbances \cite{devolder2014first}.}
\end{remark}	
\section{Theoretical Analysis of Our Algorithms}\label{sec: theoretical analysis}
% {This part is very different, but I may need to rewrite the proof, let me worry about it later.}
This section provides the regret upper bounds of 
   RHGD and RHAG, as well as our initialization method OGD. In addition, we provide the stability guarantees of RHGD and RHAG. % is discussed.

%and stability guarantees of our online algorithms RHGD and RHAG, as well as the classic OCO algorithm OGD.

%, before which we first present a supportive lemma on $\mathsf{\mathsf{C_T}}$'s  properties.

%Before the regret upper bounds, we first provide a supportive lemma on the strong convexity and the smoothness of $\mathsf{C_T}(\bm x)$. %The proof is deferred to \cite{li2018online}.

% {Should I mention OGD in this paper? I should, this is my theoretical results. I would rather delete some discussion to make space for the theorem. But, how to make it natural? I should add some explanation, e.g. OGD does not use prediction and serves as our initialization.  RHGD and RHAG uses OGD as initialization and improves its performance by using predictions. }

We first prove a supportive lemma on the properties of $\mathsf{C_T}$.

\begin{lemma}\label{lem:cost}
Given Assumption \ref{ass:1}, $\mathsf{C_T}(\bm x)$ defined in \eqref{equ: soco} is $\alpha$-strongly convex and  $L$-smooth on $\R^n$ for $L=l+4\beta$. %, thus the condition number being $Q_f= L/\alpha= \frac{l+4\beta}{\alpha}$.
	%\footnote{Here, $\alpha$, $l+4\beta$, and $\frac{l+4\beta}{\alpha}$ are merely estimates of the strong convexity and smoothness factors and the condition number, and can be replaced with better estimation.}
\end{lemma}
\begin{proof}
The Hessian of $\sum_{t=1}^T\frac{\beta}{2} \|x_t-x_{t-1}\|^2$ has eigenvalues in $[0, 4\beta]$ by the Gershgorin circle theorem. Thus, by  Assumption \ref{ass:1},  $\mathsf{C_T}(\bm x)$ is $\alpha$ strongly convex and $L=l+4\beta$ smooth.
\end{proof}

%\nit{Proof. }The Hessian of $\sum_{t=1}^T\frac{\beta}{2} \|x_t-x_{t-1}\|^2$ has eigenvalues in $[0, 4\beta]$ by the Gershgorin circle theorem. Thus, by  Assumption \ref{ass:1},  $\mathsf{C_T}(\bm x)$ is $\alpha$ strongly convex and $l+4\beta$ smooth. \qed

% Firstly, by Assumption \ref{ass:1}(i) and the convexity of $\sum_{t=1}^T \|x_t-x_{t-1}\|^2$,   $\mathsf{C_T}(\bm x)$ is $\alpha$ strongly convex. Secondly, the Hessian of $\sum_{t=1}^T \|x_t-x_{t-1}\|^2$ is a symmetric matrix with eigenvalues in $[0, 4\beta]$ by Gershgorin circle Theorem. Thus, by applying Assumption \ref{ass:1} (ii),  $\mathsf{C_T}(\bm x)$ is $L=l+4\beta$-smooth.

% {how about I remove OGD? but I still should include the proof in the appendix. so little use?}

%This subsection provides the regret bounds on RHGD and RHAG, as well as some discussion.
%The following are the upper bounds of OGD, RHGD, RHAG.

%Next, we provide the regret upper bounds of RHGD and RHAG, as well as OGD.

%The theorem below provides upper bounds on RHGD and RHAG's dynamic regrets.

Next, we provide the regret upper bounds.

\begin{theorem}[General regret upper bounds]\label{thm: regret upper bound general initial}  {Under Assumption \ref{ass:1},
 for $W\geq 0$, 
 given
 stepsizes  $\eta=1/L$ and $\lambda= \frac{1-\sqrt {\alpha/L}}{1+\sqrt{ \alpha/L}}$,\footnote{ {The stepsize conditions  can be relaxed as in [29],
yielding  different bounds.}} for any initialization $\phi$ in Step 1, we have}
  {\begin{align}
    & \textup{Reg}(RHGD)
	\leq Q_f  \left(1-\frac{1}{Q_f}\right)^{W} 	\textup{Reg}(\phi)\\
&	\textup{Reg}(RHAG)
	\leq  2  \left(1-\frac{1}{\sqrt{Q_f}}\right)^{W}	\textup{Reg}(\phi)
 \end{align}
 	where $Q_f=\frac{L}{\alpha}$ and $\textup{Reg}(\phi)$ is the regret of implementing the initial values $\{x_t(0)\}_{t=1}^T$ computed by the initialization $\phi$.}
 	\end{theorem}
 	\begin{proof}
 		Notice that  $x_t(W)$ is the $W$th iterate of GD for $\mathsf{C_T}(\bm x)$.  By GD's convergence rate (see Appendix B in \cite{li2018online}), we have
	\begin{align*}
	\text{Reg}(RHGD)&= \mathsf{C_T}(\bm x(W))-\mathsf{C_T}(\bm x^*)\\
	& \leq Q_f \left(1-1/Q_f\right)^W \left[\mathsf{C_T}(\bm x(0))-\mathsf{C_T}(\bm x^*)\right]\\
	& = Q_f \left(1-1/Q_f\right)^W	\text{Reg}(\phi)
	\end{align*}
	$\text{Reg}(RHAG)$ can be bounded similarly  by   NAG's convergence rate (see e.g. \cite{nesterov2013introductory}).
 	\end{proof}
 	
% 	 {add discussion on stepsize inequality}

\begin{theorem}[Regret upper bounds with OGD initialization]\label{thm: dynamic regret}  {When Assumption \ref{ass:1}-\ref{ass:2} hold,} the regret of the initialization method OGD \eqref{equ: ogd} with stepsize $\gamma=1/l$ is  bounded by,\footnote{ {Stepsizes $\gamma \leq 1/l$ also work, yielding  different constant factors.}}
\begin{equation}
    	\textup{Reg}(OGD) \leq \delta \sum_{t=1}^T\|\theta_t-\theta_{t-1}\|,\label{equ: ogd bdd}
\end{equation}
	where 
		$\delta= (\beta/l+1)\frac{G}{(1-\kappa)}$, $\kappa=\sqrt{(1-\frac{\alpha}{l})}$, $\theta_t=\argmin_{x\in \mathbb X} f_t(x)$ for $1\leq t \leq T$, and $\theta_0=x_0$. 

Consequently, the regrets of RHGD and RHAG with OGD initialization are upper bounded by 
	\begin{align}
	&	\textup{Reg}(RHGD)
	\leq  \delta Q_f  \left(1-\frac{1}{Q_f}\right)^{W} \sum_{t=1}^T\|\theta_t-\theta_{t-1}\|,\label{equ: rhgd bdd} \\
	&	\textup{Reg}(RHAG)
	\leq  2 \delta \left(1-\frac{1}{\sqrt{Q_f}}\right)^{W}\sum_{t=1}^T\|\theta_t-\theta_{t-1}\|.\label{equ: rhag bdd}
	\end{align}
\end{theorem}
The proof of Theorem \ref{thm: dynamic regret} is deferred to Appendix 1.

\nbf{Effect of $W$.} When $W=0$,  RHGD and RHAG reduce to OGD. When $W\geq 0$, RHGD and RHAG's  regret bounds  exponentially decay  with $W$. RHAG's bound decays faster than RHGD's since NAG converges faster than GD.

% {revise}

\nbf{Path length.} The  regret  bounds in Theorem \ref{thm: dynamic regret} all linearly depend on $\sum_{t=1}^T\|\theta_t-\theta_{t-1}\|$. OGD's bound  is similar to the bound for OCO without switching costs \cite{mokhtari2016online}.  {The term $\sum_{t=1}^T\|\theta_t-\theta_{t-1}\|$, called the \textit{path length} in literature \cite{mokhtari2016online}, consists of  (i) $\sum_{t=2}^T\|\theta_t-\theta_{t-1}\|$ that captures the variation  of the  cost functions $\{f_t\}_{t=1}^T$,  (ii) $\|\theta_1-\theta_0\|$ that represents  the quality of the initial point $x_0$ since $\theta_0=x_0$. When (i)  is close to zero, the path length is dominated by (ii). Otherwise, the path length is dominated by the variation of the cost functions.}

 %When the cost variation is large, the path length is dominated by the variation term (i). When the cost variation is close to zero, the path length is dominated by the quality of the initial point (ii), which is consistent with time-invariant optimization. 

% When the cost function changes slowly, the regret bound is small, which is intuitive since it is easy to follow a slowly changing cost. When cost function $f_t$ is  not changing with stage $t$, the regret is a constant depending on   $\|x_0-\argmin_{X} f_1(x)\|$, which is consistent with our intuition from classic optimization. When cost functions change drastically, the regret can be linear in $T$, which has been observed in literature \cite{besbes2015non,mokhtari2016online}. 

%\nbf{Initialization.}
%The regret bounds \eqref{equ: rhgd bdd} and \eqref{equ: rhag bdd} still hold with other initialization methods by replacing $\text{Reg}(OGD)$ with the regret of the corresponding initialization method. 

%\vspace{5pt}

%\nbf{Asymptotic stability.} We have the following stability result.

% {Note: OGD also stabilize the system, maybe I should mention it here.}
Finally, we provide some stability results of our algorithms. %More discussion on the stability is left as future work. % of RHGD and RHAG.

{\begin{corollary}[Asymptotic stability]\label{cor: asym stability}
Consider an  optimal control problem that is equivalent to  problem \eqref{equ: soco} when $T= \infty$:
	\begin{align}\label{equ: opt. control}
	\min_{x_t \in \mathbb X}&\ \sum_{t=1}^{+\infty} \left[f_t(x_t)+\frac{\beta}{2}\|u_t\|^2\right], \text{s.t. } x_t=x_{t-1}+u_t, \, \forall t.%\footnote{We note that the optimal control problem satisfies that the optimal steady state is the optimal solution to the stage cost solution, while EMPC usually consider scenarios that do not satisfy this condition. Nevertheless, our regret analysis focuses on the transient optimality}
	\end{align}
	Notice that $\theta_t:=\argmin_{\mathbb X} f_t(x)$ is the optimal steady state for the stage cost at  $t$. 
	If $\theta_t \to \theta_{\infty}$ as $t\to +\infty$ and 
	 $\sum_{t=1}^{+\infty} \|\theta_t -\theta_{t-1}\| <+\infty$,  then $x_t(W)\to \theta_{\infty}$ as $t\to +\infty$, where $x_t(W)$ denotes the output of RHGD (or RHAG) and $W\geq 0$.
	 
	 %RHGD and RHAG yield asymptotically stable solutions: $\|x_t(W)-\theta_t\|\to0$ as $t\to +\infty$, where $x_t(W)$ denotes the output of RHGD or RHAG

% 		Consider the following optimal control problem that is equivalent to our online optimization problem with switching costs:
% 	\begin{align*}
% 	\min_{x_t \in \mathbb X}&\quad \sum_{t=1}^T f_t(x_t)+\frac{\beta}{2}\|u_t\|^2\\
% 	\text{s.t. } & \quad x_t=x_{t-1}+u_t, \quad \forall\ t\geq 1
% 	\end{align*}
% 	When the path length is bounded $\sum_{t=1}^{+\infty} \|\theta_t -\theta_{t-1}\| <+\infty$, the outputs of our RHGD and RHAG are asymptotically stable in the sense that $\|x_t(W)-\theta_t\|\to0$ as $t\to +\infty$, where $x_t(W)$ can be the output of RHGD or RHAG and $\theta_t$ is the optimal steady state with the stage cost function at $t$.
\end{corollary}}

\section{Fundamental Limits}\label{sec: lower bound}
This section provides fundamental lower bounds on the
dynamic regrets for both $W = 0$ and $W > 0$. For simplicity,
we only consider deterministic online algorithms in this paper.

%This section provides fundamental limits of any admissible online algorithms on the dynamic regrets  for both $W=0$  and  $W>0$.  {An algorithm is admissible if it only uses the available information at each stage $t$, i.e. $\{f_1, \dots,  f_{t+W-1}\}$.} For simplicity, we only consider deterministic algorithms here.

%The admissible online algorithms refer to the algorithms that only use the available online information at each stage $t$,  i.e. $\{f_1, \dots,  f_{t+W-1}\}$.
%For  simplicity, we only consider deterministic online  algorithms in this paper.
%In the following, we provide fundamental lower bounds on dynamic regrets of online algorithms for our SOCO problem \eqref{equ: soco} for both $W=0$ case (no prediction) and $W>0$ case (with predictions). For the technical simplicity, we consider online deterministic algorithms in this section.%\footnote{ {Rigorously speaking, we also need $\A(\cdot)$ to be a measurable function on some measurable space containing $\mathds I_t$. Most existing algorithms satisfy this condition. This is discussed more in \cite{li2018online}.}}

%\vspace{6pt}

%\noindent\textcolor{nblue}{\normalfont\textit{i) The case with no prediction}}
%\vspace{-6pt}

\begin{theorem}[No prediction]\label{thm: W=0 lower bound}
	Consider  $W=0$. Given any 
	$T\geq 0$,  $\alpha>0$, $\beta\geq  0$,  {a convex compact set $\mathbb X$ with diameter $D>0$,} and  $0\leq L_T\leq DT$, for any deterministic  online  algorithm $\A$,  {there  exist a sequence of  quadratic functions $\{f_t(\cdot)\}_{t=1}^T $} with $\alpha$ strong convexity and $\alpha$ smoothness   on $\R^n$, gradient bound $(3\alpha + \beta)D$ on $\mathbb X$,  path length $\sum_{t=1}^T \|\theta_t-\theta_{t-1}\|\leq L_T$, where $\theta_t=\argmin_{\mathbb X}f_t(x)$, such that the regret is lower bounded by 
	\begin{align}\label{equ: W = 0 lower bound}
	\textup{Reg}(\mathcal A)\geq\zeta D  L_T\geq \zeta D \sum_{t=1}^T \|\theta_t-\theta_{t-1}\|
	\end{align}
	where  $\zeta =\frac{\alpha^3 (1-\rho)^2 }{32(\alpha +\beta)^2}$,  $\rho =  \frac{\sqrt Q_f -1}{ \sqrt Q_f +1}$, and $Q_f=\frac{\alpha+4\beta}{\alpha}$.
\end{theorem}

The proof is provided in \cite{li2018online}. Next, we provide some discussion. Notice that $L_T$ is an upper bound on the path length $\sum_{t=1}^T\|\theta_t-\theta_{t-1}\|$. Theorem \ref{thm: W=0 lower bound} shows that the regret lower bound is linear with the path length when $W=0$, which matches OGD's upper bound \eqref{equ: ogd bdd}. Besides, Theorem \ref{thm: W=0 lower bound} shows that when $L_T=\Omega(T)$, no online algorithm can achieve $o(T)$ dynamic regret, which is similar to the claims in \cite{besbes2015non}.

\begin{theorem}[$W$-stage predictions]\label{thm: W>=1 lower bound}
	Given any $T>1$, $1\leq W \leq T/2$, 
	$\alpha>0$, $\beta\geq  0$,  {a convex compact set $\mathbb X$ with diameter $D>0$,}   $0\leq L_T\leq DT$, for any deterministic online  algorithm $\A$,  {there  exist a sequence of quadratic functions $\{f_t(\cdot)\}_{t=1}^T $}  with $\alpha$ strong convexity and $\alpha$ smoothness   on $\R^n$, gradient bound $\alpha D$ on $\mathbb X$,    path length $\sum_{t=1}^T \|\theta_t-\theta_{t-1}\|\leq L_T$, where $\theta_t=\argmin_{\mathbb X}f_t(x)$, such that the regret satisfies

	\begin{equation}\label{equ: lower bound}
	\begin{aligned}
	&\textup{Reg}(\mathcal A)\geq
	\begin{cases}
	\frac{\zeta  D}{3}\rho^{2W}L_T, & \text{if $ L_T \geq D$,}\\
	%	\frac{\alpha D }{96} (1-\rho)^2 \left(\frac{\alpha}{\alpha+ \beta}\right)^2
	\frac{\zeta }{3}\rho^{2W}L^2_T,
	& \text{if $L_T <D$}
	\end{cases} 
	\end{aligned}
	\end{equation}
	where  $\rho =  \frac{\sqrt Q_f -1}{ \sqrt Q_f +1}$, $Q_f=\frac{\alpha+4\beta}{\alpha}$, and $\zeta =\frac{\alpha^3 (1-\rho)^2}{32(\alpha +\beta)^2}$ .
\end{theorem}

The main proof ideas are provided in Appendix 3. %Some discussion is provided below.

%Theorem \ref{thm: W=0 lower bound} and \ref{thm: W>=1 lower bound}'s proofs  are  in \cite{li2018online} and Appendix 2. % the proof of Theorem \ref{thm: W>=1 lower bound} is deferred to Appendix 2 and \cite{li2018online}. 

%Both theorems impose no constraints on the computational power of the online algorithms. Besides,
%both theorems only consider \textit{special/worst cases}, so the regret in practice  may be smaller than the provided lower bounds. 

\nbf{Special-case analysis.}  Notice that  Theorem   \ref{thm: W=0 lower bound} and \ref{thm: W>=1 lower bound}  only prove the fundamental lower bounds for  the \textit{special/worst cases}. It is possible to achieve smaller regrets in non-worst cases.

 %for som since claims that there exists some cost functions such that the lower bound hold, i.e. the theorem only considers  \textit{special/worst cases}. In some cases, the regret may be smaller than the lower bound. 

%meaning that the results and implication 

%the lower bounds hold the existence of some cost functions, 

\nbf{Comparion with RHAG.} Theorem \ref{thm: W>=1 lower bound} shows that, given limited predictions ($W\leq T/2$), the decay rate of the fundamental lower bound with respect to $W$ is similar to that of RHAG in Theorem \ref{thm: dynamic regret} for large $Q_f$, because to reach the same regret value $R$, the lower bound requires at least $W\geq \Omega((\sqrt{Q_f}-1) \log(L_T/R))$ (by $ \rho^{2W} \geq \exp(- \frac{4W}{\sqrt{Q_f}-1})$), while RHAG requires at most
$
W\leq O(\sqrt{Q_f} \log(L_T/R))
$ (by $(1- 1/\sqrt{Q_f})^W \leq\exp(-W/\sqrt{Q_f})$). In addition, given a reasonably large path length ($L_T \geq D$),  the  lower bound is linear with the path length, which matches the upper bound of  RHAG \eqref{equ: rhag bdd}. The comparison above indicates the near-optimality of RHAG at least in the special/worst cases, which may be surprising since RHAG only requires $W+1$ gradient evaluations while online algorithms in Theorem \ref{thm: W>=1 lower bound} have no computation constraints.

\nbf{More discussion on $L_T$.} Interestingly, when $L_T<D$, the lower bound in Theorem \ref{thm: W>=1 lower bound} is quadratic on $L_T$ which is smaller than the linear dependence as $L_T \to 0$. This is probably because with a small $L_T$, the online problem is ``easy'' and admits algorithms with better regrets.   When $W=0$, the  bound is the same for $L_T<D$, which is probably because the online algorithms have too limited future information  when $W=0$  to benefit from a small $L_T$. More discussion is in \cite{li2018online}.

\section{Numerical Experiments}
This section provides numerical results to complement our theoretical analysis by comparing our algorithms with MPC. 
%Some numerical results  are provided to complement the theory above. 

\begin{table}
	%	\captionsetup{justification=centering, labelsep=newline}
	\centering
	\begin{tabular}{c|c|c}
		\hline
		%	\diagbox{Algo.}{$W$} 
		Algorithm	& $W=5$      & $W=10$     \\ \hline
		RHGD & 3.48$ \times  10^{-4}$& 6.91$ \times  10^{-4}$ \\ %\hline
		RHAG &5.44$ \times  10^{-4}$ &  8.92$ \times  10^{-4}$\\ %\hline
		MPC  &  1.67$\times 10^{-1}$  & 3.31$\times 10^{-1}$ \\ \hline
	\end{tabular}
	\caption{Algorithms' running time  per stage in seconds.}
	\label{my-label}
\end{table}

% {Q: what 's the unit of the running time? how to show it in the table?}
\begin{figure}
\centering
	\begin{subfigure}[b]{0.4\textwidth}
		\includegraphics[width=\textwidth]{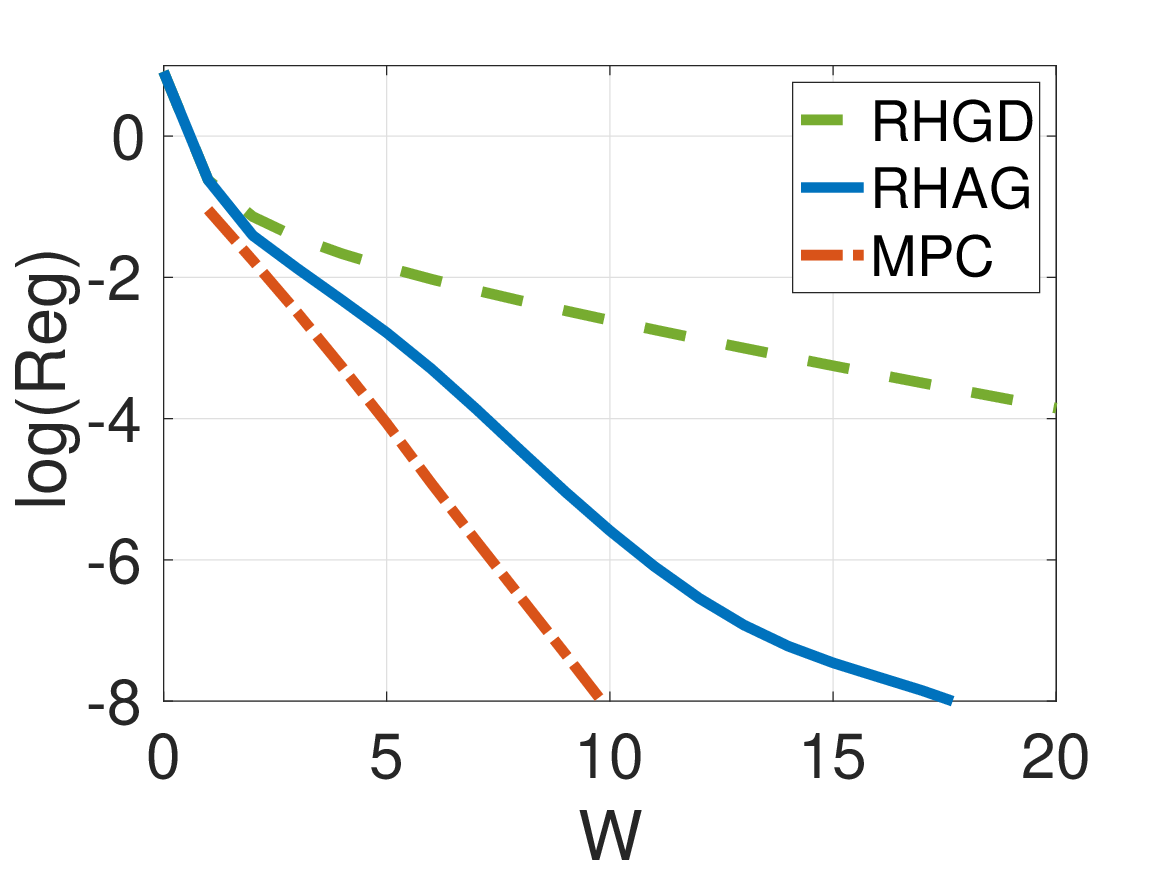}
		\caption{Smoothed regression}
		\label{fig: logistic}
	\end{subfigure} 
	\hfill
	\begin{subfigure}[b]{0.4\textwidth}
		\includegraphics[width=\textwidth]{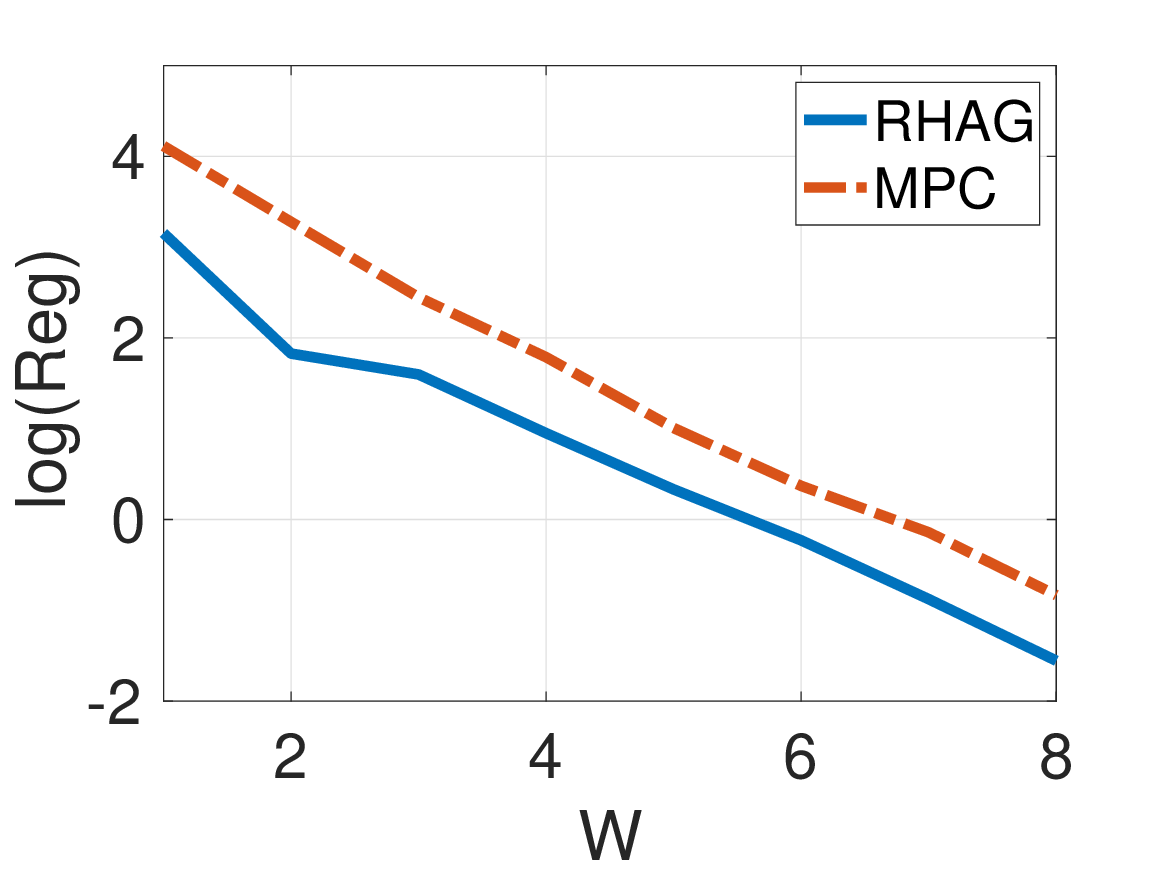}
		\caption{A special example}
		\label{fig:special}
	\end{subfigure}
	\caption{The regret of RHGD, RHAG, and MPC with   $W$. }
	\label{fig:regret}
\end{figure}
%\vspace{-10pt}

% {check parameter conflict.}
\subsubsection{Model predictive control}
%\textit{\underline{1Model Predictive Control}}%MPC is an optimization-based algorithm with good numerical performance. %We will compare our algorithm with a classic algorithm in the control literature, Model Predictive Control (MPC), which is known to enjoy good numerical performance \cite{rawlings2012postface}. 
We consider the following MPC algorithm. At each stage $t$, MPC solves the following open-loop $W$-stage receding horizon optimization
\begin{align}\label{equ: mpc}
\min_{x_{t}\dots x_{t+W-1}\in \mathbb X}\sum_{\tau=t}^{t+W-1} \left(f_\tau(x_\tau)+\frac{\beta}{2}\|x_\tau- x_{\tau-1}\|^2\right),
\end{align}
obtains the optimizer $\{x_t^t,\dots, x_{t+W-1}^t\}$, and implements  $x_t^t$.  {Though there are many variants of MPC and several methods to exploit the structures of MPC to speed up its computation, we limit ourselves to solving (\ref{equ: mpc}) by the iterative algorithm NAG. Note that at each iteration, NAG requires $W$ gradient evaluations since \eqref{equ: mpc} is a $W$-stage optimization problem. We terminate  NAG when the  gradient's norm is less than $10^{-14}$.}

%The iteration of NAG is terminated when the decrease of the objective cost in the current iteration is less than $10^{-14}$.}

%We set the termination rule to be ....

%\lina{Though there have been many variants of MPC and different methods to exploit the structure property of MPC for speeding up its computation, we limit ourselves on solving (\ref{equ: mpc}) through the iterative algorithm NAG. Note that at each iteration, NAG requires $W$ gradient evaluations because (\ref{equ: mpc}) is a $W-$stage optimization problem. We set the termination rule to be ....  } %Notice that MPC requires 

% {revise notation, avoid using $u$, and try to use a greek letter for $r$.}
\subsubsection{Smoothed regression}
% { Further, we numerically compare our gradient-based algorithms with an optimization-based MPC algorithm in two cases to complement our theoretical analysis. We show that our RHAG is comparable with MPC in a special (worst) case, which is aligned with our worst-case theoretical results. In addition, we show that RHAG  performs moderately worse than MPC in a more practical case but consumes much less computation time. This indicates that, in general (non-worst) cases, employing more computation may improve the  performance in practice. The theoretical analysis  beyond the worst case  is left as future work. }
 {Consider Example \ref{example: regression} with a loss function for logistic regression in \cite{logisticregression} with an $l_2$ regularizer:
$ f_t(x_t)= \frac{1}{M}\sum_{m=1}^M[ \log( 1+ e^{ w_{t,m}^\top x_t })-v_{t,m} w_{t,m}^\top x_t]+\frac{r}{2}\|x_t\|^2$,
where $M$ is the number of samples, $w_{t,m} \in \R^n$ are the features, $v_{t,m} \in \{0,1\}$ are the labels, $r$ is the $l_2$ regularization parameter. 
In particular, we consider $M=60$, $n=3$, $T=60$, $\beta=5$, and $r=0.5$. We generate $\{w_{t,m}\}_{m=1}^M$ as i.i.d. random Gaussian vectors with mean $\mu_t\mathbf 1_n$ and covariance $\sigma_t^2 \bm I_n$ for each $t$.  $\{\mu_t\}_{t=1}^T$ are i.i.d. from $\text{Unif}[-1,1]$, and  $\{\sigma^2_t\}_{t=1}^T$ are i.i.d.  from $\text{Unif}[0,1]$. We generate $\{v_{t,m}\}_{m=1}^M$  i.i.d.   from $\text{Bern}(p_t)$, where $\{p_t\}_{t=1}^T$ are i.i.d. from $\text{Unif}[0,1]$. Let $x_0=0$,  $\mathbb X=[-1,1]^3$ and choose stepsizes by Theorem \ref{thm: regret upper bound general initial} and \ref{thm: dynamic regret}.}   % {stepsize are not exactly the factors.} 
%The stepsizes are chosen according to Theorem \ref{thm: dynamic regret}.

Figure \ref{fig:regret}(a) plots the regrets  of RHGD and RHAG with OGD initialization and MPC  against $W$. Firstly,  the regrets of these algorithms decay linearly on a log scale, indicating the exponential decay with $W$. Further, RHAG decays faster than RHGD, which is consistent with Theorem \ref{thm: dynamic regret}.  {Besides, there are some fluctuations in  RHAG's regret caused by the fluctuations of NAG \cite{su2014differential}, but RHAG still yields smaller regrets than RHGD.} In addition,  Figure \ref{fig:regret}(a)  shows that MPC enjoys a  better decay rate with $W$. 
%Nevertheless,  to reach the same regret, the lookahead window $W$ needed by RHAG is roughly twice the window needed by MPC, indicating that RHAG exploits the lookahead information in a comparably efficient way. 
More analysis on MPC's regret  is left as future work. Table I compares the running time per stage of RHGD, RHAG, and MPC for $W=5, 10$.   {Notice that RHGD and RHAG are much faster than the MPC described in Section VI.1. This is because  our algorithms only evaluate $W+1$ gradients per stage, while we exploit multiple NAG iterations for solving MPC per stage and each iteration requires $W$ gradient evaluations. Nevertheless, we admit that there are other methods and variants of MPC that may further reduce the computation time, which is beyond the scope of this paper.}

\subsubsection{A special example}   {As indicated in Section \ref{sec: lower bound}, RHAG is near-optimal among all the deterministic online algorithms at least in the sense of the worst cases. We construct a special case to partially validate this indication by showing that RHAG slightly outperforms MPC in this case.} 
In particular, consider $ f_t(x_t)= 0.5(x_t-\theta_t)^2$,  $\mathbb X=[0,6]$, $T=20$. Let  $\{\theta_t\}_{t=1}^T$  be $[6,   0,    6  ,   0 ,    6,     6    , 0  ,  6  ,   6,0 ,    6  ,   6 ,    $ $ 0   ,  6 ,    6 ,    6  ,   6  ,   6,6,6]$. Let   $\beta =20$ and $\gamma=0.4$ and choose $\eta$ and $\lambda$  by Theorem \ref{thm: dynamic regret}.
Figure \ref{fig:regret}(b) shows that 
our gradient-based RHAG achieves slightly better performance than the optimization-based MPC in this case.

{\section{Conclusion}}

% {add non worst case}

 {This paper attempts to understand the fundamental value of the predictions and to study how to use predictions effectively in the online decision making problems} by considering an online convex optimization problem with a $W$-lookahead window on the cost functions and switching costs on the decisions. In this paper, we propose gradient-based online algorithms RHGD and RHAG and provide the dynamic regret upper bounds as well as some stability guarantees. We also characterize the fundamental limits of the dynamic regrets of any online algorithms for $W\geq 0$.  The fundamental lower bound when $W>0$ is close to RHAG's regret upper bound under certain conditions.   {There are many interesting  future directions, e.g. i) the algorithm design and analysis when there are errors and noises in the implementation, observations, and/or the  predictions of the future, in which cases RHGD may be better than RHAG  since NAG is sensitive to disturbances; ii) online algorithms when there are coupling constraints across stages and for more general dynamical systems; iii)  more analysis on the regrets of MPC and its variants; iv)  relaxing the    assumptions; v) customizing the lower bounds for different  function classes; vi)  beyond the worst-case analysis; vii) competitive ratio analysis; viii) algorithm design that involves generating predictions; ix) more discussion on stability.}

\appendix

% {Overview of supplemtary}

In the following, we provide more discussion for Theorem \ref{thm: W=0 lower bound} and Theorem \ref{thm: W>=1 lower bound} in Appendix \ref{aped: discussion on LT}, then provide a convergence rate analysis for projected gradient descent for ease of reference which is used in the proof of Theorem \ref{thm: regret upper bound general initial}. Moreover, we provide a proof for OGD's regret upper bound \eqref{equ: ogd bdd} (see Theorem \ref{thm: dynamic regret}) in Appendix \ref{append: ogd proof}. In addition,  we prove Corollary \ref{cor: asym stability} in Appendix \ref{aped: coro proof}. Next, we provide a proof of Theorem \ref{thm: W>=1 lower bound}  in Appendix \ref{append: proof of Thm 3 W>=1 LT>=D}, where a supportive lemma is proved in Appendix \ref{apen: x*=Atheta}. Besides, we provide a proof of Theorem \ref{thm: W=0 lower bound} in Appendix \ref{apen: W=0 lower bound}. Finally, we provide numerical results as promised in Remark 1 in Appendix \ref{aped: rhgd recent}.

\subsection{More discussion on Section V}\label{aped: discussion on LT}

% {where is OGD case?}

We provide more discussion on the case with a small $L_T$   in Theorem \ref{thm: W=0 lower bound} and Theorem \ref{thm: W>=1 lower bound}. When $L_T$ is small, i.e. $ L_T \leq D$, the lower bound is  $\Omega(L_T^2)$ as $L_T\to 0$, which is smaller than $\Omega(L_T)$ because $L_T^2 \leq DL_T$. An $O(L_T^2)$ upper bound can be achieved by a simple online algorithm:  $x_t^{\A}=\theta_t $. This is verified by the following arguments. Since $\theta_t$  minimizes each $f_t(\cdot)$,  the dynamic regret of  $x_t^{\A}=\theta_t $ is upper bounded by the switching costs, i.e. $\sum_{t=1}^T \frac{\beta}{2} \| \theta_t-\theta_{t-1}\|^2 $, and we have $\sum_{t=1}^T \| \theta_t-\theta_{t-1}\|^2\leq L_T^2 $. 
However, when there is no prediction, i.e., $W=0$, this simple online algorithm can not be implemented because $f_t(\cdot)$ is not available at stage $t$. In fact, the fundamental limit in Theorem \ref{thm: W=0 lower bound} indicates that no online algorithm can achieve $O(L_T^2)$ regret as $L_T\to 0$.

\subsection{Convergence rate of projected gradient descent}
The convergence rate analysis for projected gradient descent is standard in literature, but the common guarantee is on the distance between the decision variable and the optimal variable instead of on the gap between the algorithm's cost and the optimal cost (see e.g. \cite{nesterov2013introductory}). Since the proof of Theorem \ref{thm: regret upper bound general initial} requires the convergence rate in terms of the function value,  for ease of the reader and for the completeness of the paper, we provide a short proof for the convergence rate in terms of the function value below.

\begin{proposition}[Convergence rate of projected gradient descent in terms of the function value]
Consider constrained optimization $\min_{\mathbb Q} f(x)$ where $f(x)$ is $\alpha$ strongly convex and $L$ smooth and $\mathbb Q$ is convex. Then, the projected gradient descent on $f(x)$, i.e. $x(k+1)=\Pi_{\mathbb Q}[x(k)-\frac{1}{L}\nabla f(x(k))]$, enjoys the following bound:
\begin{equation}
    f(x(k))-f(x^*) \leq Q_f \left(1-1/Q_f\right)^k[f(x(0))-f(x^*)]
\end{equation}
where $Q_f=1/L$ and $x^*=\argmin_{\mathbb Q} f(x)$.
\end{proposition}
\begin{proof}
Firstly, it can be verified that the projected gradient descent update is equivalent to
$$x(k)=\argmin_{x\in \mathbb Q}[f(x(k-1)+\nabla f(x(k-1))^\top (x-x(k-1)) + \frac{L}{2}\|x-x({k-1})\|^2]$$
Therefore, we have
\begin{align*}
    f(x(k))&\leq f(x(k-1))+ \nabla f(x(k-1))^\top (x(k)-x(k-1) )+ \frac{L}{2}\|x(k)-x(k-1)\|^2\\
    & =\min_{x\in \mathbb Q}[f(x(k-1)+\nabla f(x(k-1))^\top (x-x(k-1)) + \frac{L}{2}\|x-x(k-1)\|^2]\\
    & \leq f(x(k-1)+\nabla f(x(k-1))^\top (x^*-x(k-1)) + \frac{L}{2}\|x^*-x(k-1)\|^2\\
    & \leq f(x^*) + \frac{L-\mu}{2} \|x^*-x(k-1)\|^2\\
    & = f(x^*) + \frac{L}{2}(1-1/Q_f)\|x^*-x(k-1)\|^2
\end{align*}
where the first inequality is by $L$ smoothness and the last inequality is by $\alpha$ strong convexity.

Consequently, we have
\begin{align*}
    f(x(k))-f(x^*) &\leq \frac{L}{2}(1-1/Q_f)\|x^*-x(k-1)\|^2\\
    & \leq \frac{L}{2}(1-1/Q_f)^k\|x^*-x(0)\|^2\\
    & \leq Q_f (1-1/Q_f)^k(f(x(0))-f(x^*))
\end{align*}
where the second inequality uses the established convergence rate in Theorem 2.3.4 in \cite{nesterov2013introductory}, i.e.  
    $$ \|x(k)-x^*\|^2 \leq(1-1/Q_f)^k\|x(0)-x^*\|^2,$$
    and the last inequality uses the strong convexity. 
\end{proof}

\subsection{Proof of the inequality \eqref{equ: ogd bdd} in Theorem \ref{thm: dynamic regret}}\label{append: ogd proof}

For simplicity, we slightly abuse the notation and denote  OGD's output $\mathbf x(0)$ as $\mathbf x$ below. We first prove a  lemma and then  prove \eqref{equ: ogd bdd}.% The OGD update is thus denoted as $
%  x_t=\Pi_{\mathbb X}[x_{t-1}-\gamma \nabla f_{t-1}(x_{t-1})]$ for $t\geq 2$.% $x_1=x_0$.
% We first prove a  lemma and then  prove \eqref{equ: ogd bdd}. 

% {Try to make it half a page, by adding the comments of the reviewer.}

%Before the proof, we first provide a supportive lemma.
\begin{lemma}\label{switching cost}
	Given stepsize $\gamma=1/l$, the outputs of OGD satisfy $\sum_{t=1}^T\|x_t-\theta_t\|\leq \frac{1}{1-\kappa}\sum_{t=1}^T\|\theta_t-\theta_{t-1}\|$ and\\ $\sum_{t=1}^T\|x_t-x_{t-1}\|^2\leq\frac{2G}{l(1-\kappa)}\sum_{t=1}^T \|\theta_t-\theta_{t-1}\| $,\\
	where  $x_1=\theta_0=x_0$, $\kappa=\sqrt{1-\frac{\alpha}{l}}$, and $\theta_t=\argmin_{\mathbb X}f_t(x)$. % for $t\geq 1$.
\end{lemma}

\nit{Proof. }Firstly, we bound $\sum_{t=1}^T \|x_t-\theta_t\|$ by the following.
{\small\begin{align}
	\sum_{t=1}^T \|x_t-\theta_t\|& \leq \sum_{t=1}^T\|x_t-\theta_{t-1}\|+ \sum_{t=1}^T \|\theta_{t-1}-\theta_{t}\|\nonumber\\
	& = \sum_{t=2}^T\|x_t-\theta_{t-1}\|+ \sum_{t=1}^T \|\theta_{t-1}-\theta_{t}\|\nonumber\\
	& \leq  \sum_{t=2}^T\kappa \|x_{t-1}-\theta_{t-1}\|+ \sum_{t=1}^T \|\theta_{t-1}-\theta_{t}\|%\nonumber\\
	% & \leq \sum_{t=1}^T\kappa \|x_{t}-\theta_{t}\|+ \sum_{t=1}^T \|\theta_{t-1}-\theta_{t}\|
	\label{equ: last line}
	%\label{equ: kappa}
	\end{align}}
where  the equality is by $x_1=\theta_0$, the last inequality is by OGD's updating rule and Theorem 2.2.8  in \cite{nesterov2013introductory}.\footnote{Notice that $g_Q(\cdot, \cdot)$ in \cite{nesterov2013introductory} satisfies $g_Q(x_{t-1}, l)= l(x_{t-1}-x_t)$.} Then, the bound is proved by  subtracting $\sum_{t=1}^T\kappa \|x_{t}-\theta_{t}\|$ from the first term and \eqref{equ: last line}, then dividing both sides by ${1-\kappa}$. % on both sides of the last inequality yield the bound \eqref{equ:3}. 

% {need to explain somewhere that $g_Q$ is our update.}

Next, we bound $\sum_{t=1}^T\|x_t-x_{t-1}\|^2$ by the following.
{\small
	\begin{align*}
	 \frac{l}{2}\sum_{t=1}^T\|x_t-x_{t-1}\|^2&= \frac{l}{2}\sum_{t=2}^T \|x_t-x_{t-1}\|^2=\frac{l}{2}\sum_{t=1}^{T-1} \|x_{t+1}-x_{t}\|^2\\
	& \leq  \sum_{t=1}^{T-1}[ f_{t}(x_{t})-f_{t}(x_{t+1})] \leq  \sum_{t=1}^{T-1}[ f_{t}(x_{t})-f_{t}(\theta_{t}) ]\\
	%&\leq \sum_{t=2}^T \langle \nabla f_{t-1}(x_{t-1}), x_{t-1}-\theta_{t-1}\rangle \\
	& \leq   \sum_{t=1}^{T-1} G \left\| x_{t}-\theta_{t}\right\| \leq \frac{G}{1-\kappa} \sum_{t=1}^T\|\theta_t-\theta_{t-1}\|
	\end{align*}}
where the first line uses $x_1=x_0$, the first inequality uses Corollary 2.2.1 (2.2.16) in \cite{nesterov2013introductory},\footnote{ {This is by $\bar x=x_{t}$, $f=f_{t}$, $x_Q(x_{t}, l)=x_{t+1}$, $g_Q(x_t,l)=l(x_t-x_{t+1})$.}} the second inequality is by $\theta_{t}=\argmin_{\mathbb X} f_{t}(x)$, the third inequality is by $f_{t}(x_{t})-f_{t}(\theta_{t})\leq  \langle \nabla f_{t}(x_{t}), x_{t}-\theta_{t}\rangle\leq G \left\| x_{t}-\theta_{t}\right\| $, the last one uses the first bound in Lemma \ref{switching cost}.\qed

%the fourth line is by $f_{t-1}$'s convexity, the last one is by 
%the gradient bound $G$ in Assumption \ref{ass:1} and the first bound in this lemma. \qed

Now, we can prove \eqref{equ: ogd bdd} in Theorem \ref{thm: dynamic regret} as follows.
\begin{align*}
\text{Reg}(OGD)&\leq \sum_{t=1}^T \left(f_t(x_t)-f_t(x_t^*)+\frac{\beta}{2}\|x_t-x_{t-1}\|^2\right) \\
&\leq \sum_{t=1}^T \left(f_t(x_t)-f_t(\theta_t)+\frac{\beta}{2}\|x_t-x_{t-1}\|^2\right) \\
&\leq \sum_{t=1}^{T}\left[G\|x_t-\theta_t\|+\frac{\beta}{2}\|x_t-x_{t-1}\|^2\right]\\
&\leq \delta\sum_{t=1}^T \|\theta_t-\theta_{t-1}\| %=\delta \sum_{t=1}^T \|\theta_t-\theta_{t-1}\|
\end{align*}
where  the first  line uses $\|x_t^*-x_{t-1}^*\|^2\geq 0$, the second line is by $\theta_t=\argmin_{\mathbb X} f_t(x)$ and thus $f_t(x_t^*)\geq f_t(\theta_t)$, the third inequality is by $f_t(x_t)-f_t(\theta_t)\leq \nabla f_t(x_t)^\top (x_t-\theta_t) \leq G \|x_t-\theta_t\|$, and the last inequality is by Lemma \ref{switching cost}.% and $\delta$'s definition in Theorem \ref{thm: dynamic regret}. 

%$=(\beta/l+1)\frac{G}{1-\kappa}$. %\qed

\subsection{Proof of Corollary \ref{cor: asym stability}}\label{aped: coro proof}
%This section provides a proof of Corollary \ref{cor: asym stability} based on Theorem \ref{thm: dynamic regret}.

The equivalence between \eqref{equ: opt. control} and \eqref{equ: soco} is straightforward. $\theta_t$ is the optimal steady state at $t$ since $$\theta_t=\argmin_{x=x+u\in \mathbb X} [f_t(x)+\beta/2\|u\|^2].$$
We denote $\Theta \coloneqq\sum_{t=1}^{+\infty} \|\theta_t -\theta_{t-1}\| $ and consider RHGD below. For any $T<\infty$, 
\begin{align*}
\sum_{t=1}^{T-W}\frac{\alpha}{2}\|x_t(W)-\theta_t\|^2  & \leq \sum_{t=1}^{T} \left[f_t(x_t(W)) -f_t(\theta_t)\right]\\
& \leq \mathsf{C_T}(\mathbf x(W)) - \mathsf{C_T}(\bm \theta)+ \frac{\beta}{2}\sum_{t=1}^T\|\theta_t-\theta_{t-1}\|^2\\
& \leq \text{Reg}(RHGD)+ \frac{\beta}{2}\sum_{t=1}^T\|\theta_t-\theta_{t-1}\|^2 \\
&\leq \delta Q_f \left[1-\frac{1}{Q_f}\right]^W \Theta+ \frac{\beta}{2}\Theta^2
\end{align*}
where  the first term only considers the summation  to $T-W$ because the outputs $x_t(W)$ for $t>T-W$ depend on whether RHGD terminates at $T$ or not; the first inequality is by the $\alpha$-strong convexity of $f_t(\cdot)$ and the optimality of $\theta_t$; 
the second one is by  \eqref{equ: soco} and $\|x_t(W)-x_{t-1}(W)\|^2\geq 0$, and we denote $\bm \theta=(\theta_1, \dots, \theta_T)$; the third inequality is by $\mathsf{C_T}(\bm \theta)\geq  \mathsf{C_T}(\mathbf x^*)$ and \eqref{equ: dynamic regret def}; the last line uses Theorem \ref{thm: dynamic regret}, $\sum_{t=1}^T\|\theta_t-\theta_{t-1}\|\leq \Theta$, and  $\sum_{t=1}^T\|\theta_t-\theta_{t-1}\|^2\leq (\sum_{t=1}^T\|\theta_t-\theta_{t-1}\|)^2\leq \Theta^2$. 

By letting $T\to +\infty$, we have $\sum_{t=1}^{\infty}\frac{\alpha}{2} \|x_t(W)-\theta_t\|^2 \leq \delta Q_f(1-1/Q_f)^W\Theta+\frac{\beta}{2} \Theta^2 <+\infty$, thus $\|x_t(W)-\theta_t\|\to 0$ as $t\to +\infty$. Since $\theta_t\to \theta_\infty$, we have $\|x_t(W)-\theta_{\infty}\|\to 0$. The proof for RHAG is the same. %\qed

\subsection{Proof of Theorem \ref{thm: W>=1 lower bound}}\label{append: proof of Thm 3 W>=1 LT>=D}
%This section  discusses  $L_T \geq 2D$. The proof for $L_T<2D$ is very similar and deferred to Appendix ??. 
%for technical reasons. We defer the proof for $L_T<2D$ and the proof of the technical lemmas to \cite{li2018online} for space issues.

%\footnote{Due to technical reasons, the proof is slightly different for $L_T < 2D$  and  $L_T\geq 2D$, where the factor 2  can be replaced by any value higher than 1. %, and the only change in the lower bound will be the constant factor. }
%The remaining proof for Theorem \ref{thm: W>=1 lower bound} and the proofs of the technical lemmas in this section are deferred to \cite{li2018online} due to space limitations.

% {(Q: can I revise the complete proof to general $\mathbb X$? Yes, but pay attention to the $0 \in \mathbb X$ case, I really need to assume it, otherwise Lemma 4 no longer holds, has an extra 0 term.)}

% {The only assumption I need to assume that is $0 \in \mathbb X$ and $0=\frac{\upsilon_1+\upsilon_2}{2}$. But this can be achieved by moving the coordinate (this is done by revising $f(x)$'s assumption to move the center.}

% {To do: say for simplicity, assume $n=1$, for general $n>1$, still hold, just with block-diagonal matrix in Lemma 4. We actually consider this case in \cite{li2019online}}

The main proof idea is to construct  random   cost functions and show that the lower bound \eqref{equ: lower bound} holds in expectation, and thus there must exist some case with positive probability such that the lower bound holds.

Without loss of generality, we consider an one dimension case $\mathbb X=[-\frac{D}{2}, \frac{D}{2}]$ and let $x_0=0$.\footnote{{Our proof can be extended to any $\mathbb X\subseteq \R^{n}$  by letting $f_t(x_t)=\alpha/2 \|x_t-\theta_t - \frac{\upsilon_1+\upsilon_2}{2}\|^2$, where $\upsilon_1,\upsilon_2\in \mathbb X$ and $\|\upsilon_1-\upsilon_2\|=D$, and by generating $\theta_t$  randomly   with probability $\mathbb P(\theta=\upsilon_1)=\mathbb P(\theta=\upsilon_2)=1/2$.}} %We let $x_0=0$ for simplicity. %The proofs of the technical lemmas in this section are provided  in \cite{li2018online}.

%	and $\mathbb X=[-\frac{D}{2}, \frac{D}{2}]$ with diameter $D$.\footnote{\blue{Our proof can be extended to $\mathbb X\subseteq \R^{n}$  by letting $f_t(x_t)=\alpha/2 \|x_t-\theta_t\|^2$, where $\theta_t$ is randomly  selected from $\{\upsilon_1,\upsilon_2\} \subseteq \mathbb X$  s.t. $\|\upsilon_1-\upsilon_2\|=D$.}} The proofs of lemmas below are in \cite{li2018online}.

For technical reasons, we consider three cases: (i) when $L_T\geq 2D$, (ii) when $D<L_T <2D$, (iii) when $0\leq L_T \leq D$, and construct slightly different cost function sequences for each case.

\noindent\underline{Proof for (i) when $L_T\geq 2D$:}

\nbf{Part 1: construct random $\{f_t(\cdot)\}_{t=1}^T$: }
For any  $\alpha >0$, $\beta>0$ ($\beta=0$ is trivial), we construct quadratic function
$
f_t(x_t)=\frac{\alpha}{2}(x_t-\theta_t)^2
$
with parameter $\theta_t \in \mathbb X$. Thus, $\theta_t = \argmin_{\mathbb X} f_t(x_t)$.
We construct random $\{\theta_t\}_{t=1}^T$ as follows. Let $\Delta=\ceil{T/\floor{L_T/D}}$ and divide  $T$ stages into $K=\ceil{\frac{T}{\Delta}}$ parts,
where each part has $\Delta$ stages,  except possibly the last part. Notice that $1\leq K \leq T$ since $1 \leq \Delta\leq T$ when $D\leq L_T\leq DT$. For $0\leq k \leq K-1$,  generate $\theta_{k\Delta+1}$ independently by distribution $\mathbb P(\theta=\frac{D}{2})=\mathbb P(\theta=-\frac{D}{2})=\frac{1}{2}$, and let $\theta_t=\theta_{k\Delta+1}$ for $k\Delta+2\leq t \leq (k+1)\Delta$.%, which satisfies the next lemma.   %We verify the path length bound below. %It can be verified that the path length of the constructed sequence  is no more than $L_T$.

The next lemma shows that our constructed cost functions satisfy the path length upper bound. 
\begin{lemma}\label{lem: path length <LT}
	For  $f_t(x_t)$ constructed above, the path length of $\{f_t\}_{t=1}^T$ satisfies $\sum_{t=1}^T \|\theta_t-\theta_{t-1}\|\leq L_T$, where $\theta_0=x_0=0$. 
\end{lemma}
%	The proof is in \cite{li2018online}.
\begin{proof}

	 By the definition of $\Delta$, we have
	\begin{align*}
	\Delta&=\ceil{T/\floor{L_T/D}} \geq T/\floor{L_T/D} 
	\end{align*}
	Equivalently, we have
	$ \floor{L_T/D}  \geq T/\Delta$ and thus $ \floor{L_T/D}  \geq \ceil{T/\Delta}=K$. 
	
		Then, according to the definition of $\theta_t$ above, we have
	\begin{align*}
	\sum_{t=1}^T \| \theta_t- \theta_{t-1}\|  & = \sum_{k=0}^{K-1} \| \theta_{k\Delta+1}- \theta_{k\Delta}\|  \leq DK  \leq D\floor{L_T/D}=L_T
	\end{align*}
	which completes the proof.

	%Since $K = \ceil{\frac{T}{\Delta}}= \min\{i\in \Z |\ i \geq T/\Delta \}$, and $\floor{L_T/D}\in \Z$, we have 
	%$ K \leq \floor{L_T/D}\leq L_T/D$
\end{proof}

\nbf{Part 2: characterize $\mathbf x^*$.} The  problem  \eqref{equ: soco} constructed in Part 1 enjoys a closed-form optimal solution: $\mathbf x^* =  \bm \theta \mathbf A^{\top}$, where 
$\mathbf x^*=(x^*_1, \dots, x^*_T)$ denotes the optimal solution as a row vector in $\R^T$ $\bm \theta=(\theta_1, \dots, \theta_T)$, $\mathbf A=(a_{i,j})_{i,j=1}^T$. Equivalently, we have $x_t^*=\sum_{\tau=1-t}^{T-t} a_{t, t+\tau} \theta_{t+\tau}$ so $a_{t, t+\tau}$  represents the influence of $\theta_{t,t+\tau}$ on $x_t^*$. Lemma \ref{lem: x*=Atheta} will show that the influence  $a_{t, t+\tau}$ decays exponentially for $\tau\geq 0$.% the influences of the future parameters decays
\begin{lemma}\label{lem: x*=Atheta}
	Consider the cost in Part 1. The optimal solution to \eqref{equ: soco} is $\mathbf x^* =  \bm \theta \mathbf A^{\top}$, where 
	$a_{t,t+\tau} \geq \frac{\alpha}{\alpha+\beta}(1-\rho)\rho^{\tau}$ for $\tau\geq 0$.%,
	%	where $\rho = \frac{\sqrt{Q_f}-1}{\sqrt{Q_f}+1}$.
\end{lemma}
The proof is provided in Appendix \ref{apen: x*=Atheta}.

\nbf{Part 3: characterize  $\mathbf x^{\A}$.}
The key observation here is that the output $x_t^{\A}$ of any online algorithm $\A$
is a random variable determined by $\{\theta_s\}_{s=1}^{t+W-1}$. This is because $x_t^{\A}$ is decided by $\A$ based on
$\{ f_s(\cdot)\}_{s=1}^{t+W-1}$, which is determined by $\{\theta_s\}_{s=1}^{t+W-1}$ according to the construction in Step 1.\footnote{ {Rigorously speaking, to ensure the random variable to be well-defined, some measurability assumption on $\A$ should be imposed, which is satisfied by most algorithms in practice and is thus omitted here for simplicity.}} %which is omitted here for simplicity. We note that most algorithms  to ensure that the random variable is well-defined which is satisfied by most algorithms in practice.}}

\nbf{Part 4: lower bound }$\E [\text{Reg}(\A)]$\textbf{.} To prove the lower bound, we define a set of stages
onsider  a set of stages 
$$\mathbb  J  \coloneqq \{1\leq t\leq T-W\mid  t+W \equiv 1\pmod \Delta \}.$$
Before the proof of the lower bound, we first prove two helping lemmas.
\begin{lemma}\label{lem: E(xtA-xt*)^2}
	Consider the cost in  Part 1, 
	for any online  algorithm $\A$, we have
	$\E | x_t^{\A} -x_t^*|^2 \geq \frac{a_{t,t+W}^2 D^2}{4}$ for any $t\in \mathbb J$, 
	where $a_{t,t+W}$ is an entry of matrix $\mathbf A$ defined in Lemma \ref{lem: x*=Atheta}.
	
\end{lemma}

\begin{proof}
	We denote the $\sigma$-algebra generated by $\theta_1, \dots, \theta_{t}$ as $\F_t$. By our discussion in Part 2, $x_t^*$ is $\F_T$-measurable. In addition, by Part 3, for any online algorithm $\A$, the output $x_t^{\A}$ is $\F_{t+W-1}$-measurable. It is a classic result that for any $\sigma$-algebra $\F$ of the probability space,  the conditional expectation $\E[X\mid \F]$ minimizes the mean square error $E(Y-X)^2$ among any random variable $Y$ that is  $\F$-measurable (see Theorem 4.1.15 in \cite{durrett2019probability} for example). Therefore, we have
	$$ \E ( x_t^{\A} -x_t^*)^2 \geq  \E\left( \E[x_t^*\mid \F_{t+W-1}]-x_t^*\right)^2$$
	Consequently, we do not have to discuss each online algorithm $\A$ but only need to bound  $\E\left( \E[x_t^*\mid \F_{t+W-1}]-x_t^*\right)^2$, which is provided below.
		\begin{align*}
	 \E\left( \E[x_t^*\mid \F_{t+W-1}]-x_t^*\right)^2&= \E\left( \sum_{\tau=1}^{t+W-1}a_{t,\tau} \theta_{\tau}-\sum_{\tau=1}^{T}a_{t,\tau} \theta_{\tau}\right)^2 =\E\left(\sum_{\tau=t+W}^{T}a_{t,\tau} \theta_{\tau}\right)^2\\
	& = \E[(a_{t,t+W}+\dots +a_{t,t+W+\Delta-1})\theta_{t+W}]^2 + \E\left(\sum_{\tau=t+W+\Delta}^T \theta_{\tau}\right)^2\\
	& \geq \E [a_{t,t+W}^2 \theta_{t+W}^2] = a_{t,t+W}^2 \frac{D^2}{4}
	\end{align*}
	where the first line  is because $x_t^*=\sum_{\tau=1}^{T}a_{t,\tau}\theta_{\tau}$ as discussed in Part 2 and $\E[\theta_{\tau}\mid \F_{t+W-1}]=0$ for $\tau\geq t+W$ when  $t\in \mathbb J$ according to our definition of $\{\theta_t\}_{t=1}^T$ in Part 1; the second line is because when $t\in \mathbb J$, we have $\theta_{t+W}=\dots =\theta_{t+W+\Delta-1}$ with mean 0 and $\theta_{t+W}$ is independent of $\theta_{\tau}$ for $\tau\geq t+W+\Delta$ by Part 1; the third line is because $a_{t,\tau}>0$  by Lemma \ref{lem: x*=Atheta}, and $\E[\theta_{t+W}^2]=\frac{D^2}{4}$ by Part 1. 
\end{proof}

\begin{lemma} \label{lem: W>0, LT>2D |J|}
	When $T \geq 2W$ and $ L_T \geq 2D$, we have 
	$ |\mathbb J| \geq \frac{L_T}{12D}$.
\end{lemma}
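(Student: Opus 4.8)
The plan is to reduce the claim to elementary estimates on the integer $\Delta=\lceil T/\lfloor L_T/D\rfloor\rceil$ introduced in Step~1. Throughout, write $m=\lfloor L_T/D\rfloor$; since $L_T\ge 2D$ we have $m\ge 2$, and since the path length satisfies $L_T\le DT$ we have $m\le T$, so $\Delta$ is well defined and $1\le\Delta\le\lceil T/2\rceil$. The first step is to reformulate $|J|$ combinatorially. By definition, $J=\{t:\ 1\le t\le T-W,\ t+W\equiv 1\ (\mathrm{mod}\ \Delta)\}$, i.e.\ $J=\{t:\ 1\le t\le T-W,\ t\equiv 1-W\ (\mathrm{mod}\ \Delta)\}$, so $|J|$ is the number of integers in the block $\{1,\dots,T-W\}$ lying in one fixed residue class modulo $\Delta$. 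Since $T\ge 2W$ (hence $T-W\ge W\ge 1$) makes this block nonempty, and any $\Delta$ consecutive integers contain exactly one member of the class,
\[
|J|\ \ge\ \Big\lfloor \frac{T-W}{\Delta}\Big\rfloor\ \ge\ \frac{T-W}{\Delta}-1 .
\]

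Next I would extract two lower bounds from this inequality and combine them. For the first, note $\Delta=\lceil T/m\rceil\le\lceil T/2\rceil$ because $m\ge 2$, whereas $T-W\ge T-\lfloor T/2\rfloor=\lceil T/2\rceil$ because $T\ge 2W$ forces the integer $W$ to satisfy $W\le\lfloor T/2\rfloor$; hence $\Delta\le T-W$, and therefore $|J|\ge\lfloor (T-W)/\Delta\rfloor\ge 1$. For the second, I use $\Delta\le T/m+1$ together with $T-W\ge T/2$ to get
\[
|J|\ \ge\ \frac{T/2}{T/m+1}-1\ =\ \frac{Tm}{2(T+m)}-1\ \ge\ \frac{m}{4}-1 ,
\]
the last step using $T+m\le 2T$, which follows from $m\le T$.

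The final step is to split on the size of $m$. If $m\le 11$, then $L_T<(m+1)D\le 12D$, so $L_T/(12D)<1\le|J|$. If $m\ge 12$, then $\frac{m}{4}-1\ge\frac{m}{4}-\frac{m}{12}=\frac{m}{6}$; moreover $L_T\ge 2D$ implies $m\ge L_T/D-1\ge L_T/(2D)$, so $|J|\ge m/6\ge L_T/(12D)$. Together these two cases give $|J|\ge L_T/(12D)$ for all admissible $T,W,L_T$.

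I expect no conceptual obstacle here; the only delicate points are bookkeeping — keeping the floor/ceiling inequalities pointing the right way, checking the easily-overlooked fact $W\le\lfloor T/2\rfloor$, and verifying that the threshold $m=12$ is exactly the place where the ``small $m$'' estimate ($|J|\ge 1$) and the ``large $m$'' estimate ($|J|\ge m/6$) jointly dominate $L_T/(12D)$. Once the combinatorial identity $|J|=\#\{t\le T-W:\ t\equiv 1-W\ (\mathrm{mod}\ \Delta)\}$ is in place, the rest is purely arithmetic.
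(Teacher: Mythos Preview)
Your proof is correct and follows essentially the same route as the paper: both start from $|J|\ge\lfloor(T-W)/\Delta\rfloor$ and then reduce to elementary estimates relating $\Delta$, $m=\lfloor L_T/D\rfloor$, and $T$. The only organizational difference is that the paper avoids your case split by first proving $(T-W)/\Delta\ge 1$ as a separate supporting lemma and then applying $\lfloor x\rfloor\ge x/2$ for $x\ge 1$ in a single chain to reach $|J|\ge\tfrac{1}{8}m\ge\tfrac{1}{12}L_T/D$, whereas you use $\lfloor x\rfloor\ge x-1$ together with the fallback $|J|\ge 1$ and a split at $m=12$; both arrive at the same constant.
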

\begin{proof}
	By using the definition of $\mathbb J$ and the properties of the floor and ceiling operators, we have
		\begin{align*}
		|\mathbb J| &= \ceil{T/\Delta}- \ceil{W/\Delta} \geq \floor{\frac{T-W}{\Delta}}  \geq \frac{T-W}{2\Delta}\\
	& \geq \frac{1}{2} \frac{T-W}{T/\floor{L_T/D}+1} = \frac{1}{2}\floor{L_T/D}\frac{T-W}{T+\floor{L_T/D}}\\
	& \geq \frac{1}{2} \floor{L_T/D}  \frac{T-T/2}{T+T} = \frac{1}{8} \floor{L_T/D} \geq \frac{1}{12}L_T/D
	\end{align*}
	where the first equality can be proved by noticing that 
	\begin{align*}
	|\mathbb J|&=|\{W+1\leq\tau\leq T\mid \tau \equiv 1 (\text{mod}) \Delta \}|\\
	&=|\{1\leq \tau \leq T \mid  \tau \equiv 1(\text{mod})  \Delta\}| - \{1\leq \tau \leq T \mid  \tau \equiv 1(\text{mod})  \Delta\}|  \\
&=\ceil{T/\Delta}- \ceil{W/\Delta} ;
	\end{align*}
	the first inequality is a property of floor and ceiling functons; the second inequality uses the fact that $\floor{x}\geq x/2$ when $x\geq 1$, and that $T-W\geq \Delta$ when $L_T\geq 2D$ and $T\geq 2W$ because we have $\frac{T}{T-W}\leq 2 \leq \floor{L_T/D}$ and thus $T-W=\ceil{T-W}\geq \ceil{T/\floor{L_T/D}}=\Delta$; the third inequality is by $\Delta = \ceil{T/\floor{L_T/D}}\leq T/\floor{L_T/D} +1$, the fourth inequality is by $T \geq 2W$ and $L_T \leq DT$, and the last inequality is because $L_T/ D \geq 2$, and $\floor{x}\geq \frac{2}{3} x$ when $x\geq 2$.
\end{proof}
%	 $T\geq 2W$, we have $\frac{T}{T-W}\leq 2$, when 
%	
%	
%	
%	; the third inequality is by $\Delta = \ceil{T/\floor{L_T/D}}\leq T/\floor{L_T/D} +1$, the fourth inequality is by $T \geq 2W$ and $L_T \leq DT$, and the last inequality is because $L_T/ D \geq 2$, and $\floor{x}\geq \frac{2}{3} x$ when $x\geq 2$.
%
%	
%	 after rewriting the set $J$, the first inequality is a property of floor and ceiling functons, the second inequality is by Lemma \ref{lem: W>0 LT>2D bound Delta}, the third inequality is by $\Delta = \ceil{T/\floor{L_T/D}}\leq T/\floor{L_T/D} +1$, the fourth inequality is by $T \geq 2W$ and $L_T \leq DT$, and the last inequality is because $L_T/ D \geq 2$, and $\floor{x}\geq \frac{2}{3} x$ when $x\geq 2$.
%	

Based on the helping lemma above, we can lower bound the regret  in expectation. 
\begin{align*}
\E [\text{Reg}(\A)]&=\E[\mathsf{C_T}(\mathbf x^{\A})-\mathsf{C_T}(\mathbf x^*)] \geq \frac{\alpha}{2}\E\|\mathbf x^{\A}-\mathbf x^*\|^2\\
& = \frac{\alpha}{2} \sum_{t=1}^T \E |x_t^{\A}-x_t^*|^2   \geq \frac{\alpha}{2}\sum_{t\in \mathbb J} \E |x_t^{\A}-x_t^*|^2 \\
& \geq |\mathbb J|\frac{a_{t,t+W}^2 D^2\alpha}{8}
\geq  \frac{\alpha D }{96} (1-\rho)^2 \left(\frac{\alpha}{\alpha+ \beta}\right)^2L_T\rho^{2W}
\end{align*}
where the first inequality uses  Lemma \ref{lem:cost}, the third one uses Lemma \ref{lem: E(xtA-xt*)^2},  the last one uses Lemma  \ref{lem: W>0, LT>2D |J|} and \ref{lem: x*=Atheta}. Thus, there  exists some realization of $\bm \theta$ yielding the lower bound on the regret, which completes the proof when $L_T\geq 2D$.  % that the lower bound hold.

\noindent\underline{Proof for (ii) when $D< L_T<2D$:} The proof is very similar to the proof above. We also consider cost function $f_t(x_t)=\frac{\alpha}{2}(x_t-\theta_t)^2$, but we define $\{\theta_t\}_{t=1}^T$ in a slightly different way, i.e. we let $\theta_1=\dots=\theta_W=0$, and $\theta_{W+1}=\dots=\theta_T$ be a random variable following distribution $\Pb(\theta_t=\frac{D}{2})=\Pb(\theta_t= -\frac{D}{2})=\frac{1}{2}$. 	It is easy to verify that the upper bound $L_T$ on the path length is satisfied: $\sum_{t=1}^T\| \theta_t - \theta_{t-1}\| = \| \theta_{W+1}\| = \frac{D}{2}\leq \frac{L_T }{2}\leq L_T$. Since the matrix $\mathbf A$ does not depend on our choices of $\{\theta_t\}_{t=1}^T$, Lemma \ref{lem: x*=Atheta} still holds. In addition, similar to Lemma \ref{lem: E(xtA-xt*)^2}, we have $ \E |x_1^\A-x_1^*|^2 \geq \frac{a_{1,1+W}^2 D^2}{4}$.

Consequently, we have the lower bound for the regret in expectation by
\begin{align*}
\E [\text{Reg}(\A)]&=\E[\mathsf{C_T}(\mathbf x^{\A})-\mathsf{C_T}(\mathbf x^*)] \geq \frac{\alpha}{2}\E|\mathbf x^{\A}-\mathbf x^*|^2 \geq \frac{\alpha}{2} \E |x_1^{\A}-x_1^*|^2  \\
&\geq \frac{\alpha a_{1,1+W}^2D^2}{8}  \geq \frac{\alpha D^2}{8} \rho^{2W}(1-\rho)^2 \left(\frac{\alpha}{\alpha+ \beta}\right)^2 \geq \frac{\alpha DL_T}{96} \rho^{2W}\left(\frac{\alpha(1-\rho)}{\alpha+ \beta}\right)^2
\end{align*}

\noindent\underline{Proof for (iii) when $0\leq L_T\leq D$:} The proof is very similar to the proof above. We also consider cost function $f_t(x_t)=\frac{\alpha}{2}(x_t-\theta_t)^2$, but we define $\{\theta_t\}_{t=1}^T$ in a slightly different way, i.e. we let $\theta_1=\dots=\theta_W=0$, and $\theta_{W+1}=\dots=\theta_T$ be a random variable following distribution $\Pb(\theta_t=\frac{L_T}{2})=\Pb(\theta_t= -\frac{L_T}{2})=\frac{1}{2}$. 	It is easy to verify that the upper bound $L_T$ on the path length is satisfied: $\sum_{t=1}^T\| \theta_t - \theta_{t-1}\| = \| \theta_{W+1}\| = \frac{L_T }{2}\leq L_T$. Since the matrix $\mathbf A$ does not depend on our choices of $\{\theta_t\}_{t=1}^T$, Lemma \ref{lem: x*=Atheta} still holds. In addition, similar to Lemma \ref{lem: E(xtA-xt*)^2}, we have $ \E |x_1^\A-x_1^*|^2 \geq \frac{a_{1,1+W}^2 L_T^2}{4}$.

Consequently, we have the lower bound for the regret in expectation by
\begin{align*}
\E [\text{Reg}(\A)]&=\E[\mathsf{C_T}(\mathbf x^{\A})-\mathsf{C_T}(\mathbf x^*)] \geq \frac{\alpha}{2}\E|\mathbf x^{\A}-\mathbf x^*|^2 \geq \frac{\alpha}{2} \E |x_1^{\A}-x_1^*|^2  \\
&\geq \frac{\alpha a_{1,1+W}^2L_T^2}{8}  \geq \frac{\alpha L_T^2}{8} \rho^{2W}(1-\rho)^2 \left(\frac{\alpha}{\alpha+ \beta}\right)^2 \geq \frac{\alpha L^2_T}{96} \rho^{2W}\left(\frac{\alpha(1-\rho)}{\alpha+ \beta}\right)^2
\end{align*}

%
%
%where the last inequality is not tight
%
%
%By Lemma \ref{lem: jump once E(xtA-xt*)^2} to be stated below, we have $\E \|x^\A-x^*\|^2 \geq\E \|x_1^\A-x_1^*\|^2 \geq \frac{a_{1,1+W}^2 D^2}{4}$. 
%As a result, there must exist a sequence such that 
%\begin{align*}
%&\mathsf{C_T}(x^{\A})-\mathsf{C_T}(x^*) \geq \frac{\alpha}{2} \|x^{\A}-x^*\|^2 \\
%& \geq \frac{\alpha D^2}{8} \rho^{2W}(1-\rho)^2 \left(\frac{\alpha}{\alpha+ \beta}\right)^2 \geq \frac{\alpha DL_T}{96} \rho^{2W}\left(\frac{\alpha(1-\rho)}{\alpha+ \beta}\right)^2
%\end{align*}
%The proof is done.
%

%
%\subsection{Proof of Lemma \ref{lem: path length <LT}:} \label{apen:  path length <LT}
%
%%	\nit{Proof of Lemma \ref{lem: path length <LT}:}
%	
%	
%	
%	According to the construction, 	for any realization of $\theta_t$, we have
%	\begin{align*}
%	\sum_{t=1}^T \| \theta_t- \theta_{t-1}\|  & = \sum_{k=0}^{K-1} \| \theta_{k\Delta+1}- \theta_{k\Delta}\|  \leq DK 
%	\end{align*}
%	
%	
%	In the following, we will show that $K\leq L_T/D$, then the proof is done. Remember the definition of $\Delta$: 
%	\begin{align*}
%	\Delta&=\ceil{T/\floor{L_T/D}} \geq T/\floor{L_T/D} 
%	\end{align*}
%	Equivalently, 
%	$ \floor{L_T/D}  \geq T/\Delta$
%	Since $K = \ceil{\frac{T}{\Delta}}= \min\{i\in \Z |\ i \geq T/\Delta \}$, and $\floor{L_T/D}\in \Z$, we have 
%	$ K \leq \floor{L_T/D}\leq L_T/D$
%	\qed

\subsection{Proof of Lemma \ref{lem: x*=Atheta}}\label{apen: x*=Atheta}
The proof takes four steps:
\begin{enumerate} [(I)]
	\item study unconstrained optimization and show that $\bm{\tilde x}^* = \argmin_{
		\R^T} \mathsf{C_T}(\bm x)=\bm \theta\bm A^\top$.
	\item show that the constrained optimization admits the same optimal solution: $\bm x^*= \bm{\tilde x}^*$
	\item give closed-form expression for matrix $\bm A$
	\item lower bound the entries $a_{t,t+\tau}$ for $\tau\geq 0$ of matrix $\bm A$
\end{enumerate}

\nbf{(I) Unconstrained optimization $\argmin_{
		\R^T} \mathsf{C_T}(x)=A\theta$.}\\
By \eqref{equ: soco} and Part 1 in Appendix \ref{append: proof of Thm 3 W>=1 LT>=D}, we have that
\begin{align*}
\mathsf{C_T} (\bm x) &= \sum_{t=1}^T \left[  f_t(x_t) + \frac{\beta}{2} \| x_t - x_{t-1}\|^2 \right] = \sum_{t=1}^T \left[ \frac{\alpha}{2} \|x_t - \theta_t \|^2 + \frac{\beta}{2} \| x_t - x_{t-1}\|^2 \right]. 
\end{align*} 
Since $\mathsf{C_T}(\bm x)$ is strongly convex, the optimal solution to  $\min_{\R^T} \mathsf{C_T}(\bm x)$ can be defined by the first-order condition below:
\begin{align*}
& \alpha (\tilde x_t- \theta_t) + \beta(2 \tilde x_t-\tilde x_{t-1}-\tilde x_{t+1})=0,\quad t=1,\dots, T-1\\
&\alpha (\tilde x_T- \theta_T) + \beta( \tilde x_T-\tilde x_{T-1})=0
\end{align*}
By $x_0 = \theta_0=0$ and canceling  $\alpha$ on both sides, we can write the linear equation systems in the  matrix form:
$  \bm{\tilde  x}^*\bm H^\top  = \bm \theta$,
where $\bm x, \bm\theta$ are row vectors, $\bm H$ is given as below:
%\begin{align*}
%&	h_{tt} = 1+2 \frac{\beta}{\alpha}, \quad 1\leq t \leq T-1\\
%&   h_{TT}= 1+ \frac{\beta}{\alpha}\\
%&   h_{t,t-1} = h_{t, t+1} = -\frac{\beta}{\alpha} \\
%&   h_{t, s} =0, \quad \text{otherwise}
%\end{align*}
\begin{equation}\label{equ:: H}
\bm H=
\begin{pmatrix} 
1+2 \frac{\beta}{\alpha}& -\frac{\beta}{\alpha} &0  & \cdots   &0 \\
-\frac{\beta}{\alpha} & 1+2\frac{\beta}{\alpha} & -\frac{\beta}{\alpha} &\cdots  &0\\
0 &	-\frac{\beta}{\alpha} & 1+2\frac{\beta}{\alpha}  &\cdots  &0\\
\vdots &\vdots &  \vdots &\ddots   & \vdots  \\
0 & 0 & 0  &\cdots & 1+ \frac{\beta}{\alpha}\\
\end{pmatrix}
\end{equation}
Notice that $\bm H$ is strictly diagonally dominant, so $\bm H$ is invertible. Therefore, the optimal solution to the unconstrained optimization, $\bm{\tilde x}^* = \argmin_{
	\R^T} \mathsf{C_T}(\bm x)$, is given by
\begin{align*}
&\bm{\tilde x}^* =\bm \theta \bm A^\top \ \ \ 
\text{where }\bm A\coloneqq \bm H^{-1}
\end{align*}

\nbf{(II) The constrained optimization has the same solution.}
Since $\bm H$ is strictly diagonally dominant, then by Theorem 1 in \cite{varah1975lower}, we have
$$  \|\bm A\|_{\infty}= \|\bm H^{-1}\|_{\infty} \leq  \max_{1\leq t \leq T} \frac{1}{|h_{tt}|-\sum_{s\not = t}|h_{t,s}|}=1$$
Besides, since $\bm H$ has negative off-diagonal entries and positive diagonal entries, and is strictly diagonally dominant, the inverse of $\bm H$, denoted by $\bm A$ now, is nonnegative. 
Therefore, for each $t$, $\tilde x^*_t$ can be written as a convex combination of elements in $\mathbb X$:
$$ \tilde x^*_t = \sum_{s=1}^T a_{t,s}\theta_s + (1- \sum_{s=1}^T a_{t,s})0 $$
because $\theta_t \in \mathbb X $ and $0 \in \mathbb X$. By the convexity of $\mathbb X$, we have $\tilde x^*_t \in \mathbb X$, then naturally, $\tilde x^* \in \mathbb X\times \dots \times \mathbb X$. As a result, $\bm x^* = \argmin_{\mathbb X\times \dots \times \mathbb X} \mathsf{C_T}(\bm x)= \argmin_{\R^T} \mathsf{C_T}(\bm x)=\bm{\tilde x}^*=\bm \theta\bm A^\top $.

\nbf{(III) Closed form expression of $\bm A$.}\\
Since matrix $\bm H$ has many good properties, such as strictly diagonal dominance, positive diagonally entries and negative off-diagonal entries, tridiagonality, symmetry, we can find a closed-form expression for its inverse, denoted by $\bm A$ now, according to Theorem 2 in \cite{concus1985block}. 
In particular, the entries of $A$ are given by $a_{t,t+\tau}= \frac{\alpha }{\beta} w_t v_{t+\tau}$ 	for $\tau\geq 0$ where
	\begin{align*}
	w_t &= \frac{\rho}{1- \rho^2}\left( \frac{1}{\rho^t}   -\rho^t \right)\quad v_T = \frac{1}{-w_{T-1}+ (\xi -1) w_t}\\
	v_t &= c_3 \frac{1}{\rho^{T-t}} + c_4 \rho^{T-t} \qquad	c_3 = v_T\left( \frac{(\xi -1 )\rho - \rho^2}{1-\rho^2}  \right)\qquad c_4  = v_T \frac{1-(\xi-1) \rho}{1-\rho^2}
	\end{align*}
and $\rho = \frac{\sqrt{Q_f}-1}{\sqrt{Q_f}+1}$, $\xi = \alpha/\beta +2$. Since $A$ is nonnegative and $w_t$ is apparently positive, we have $v_t \geq 0$ for all $t$.

\nbf{(IV) Lower bound $a_{t,t+\tau}$ for $\tau\geq 0$.}
\\
We will bound $w_t$, $v_T$ and $v_{t+\tau}/v_T$ separately and then combine them together for a lower bound of $a_{t,t+\tau}$ for $\tau\geq 0$.

	First, we bound $w_t$ by
	$$\rho^t w_t = \frac{\rho}{1-\rho^2}(1-\rho^{2t})\geq \rho$$
	since $t\geq 1$ and $\rho<1$.
	
	Next, we bound $v_T$ in the following way:
	\begin{align*}
	\rho^{-T} v_T & = \frac{1}{(\xi-1)(1-\rho^{2T})-(\rho-\rho^{2T-1}) } \frac{1-\rho^2}{\rho}\\
	& \geq \frac{1}{(\xi-1)(1-\rho^{2T}) } \frac{1-\rho^2}{\rho}  \geq \frac{1}{\xi-1 } \frac{1-\rho^2}{\rho}  
	\end{align*}
	where $\xi = \frac{\alpha }{\beta}+2= \frac{2Q_f +2}{Q_f-1}$, $\rho = \frac{\sqrt{Q_f}-1}{\sqrt{Q_f}+1}$; the first inequality is by $T\geq 1$, $(\rho-\rho^{2T-1})\geq 0$; the second inequality is by $0< \rho<1$.
	
	Then, we bound $v_{t+\tau}/v_T$.
	\begin{align*}
	\rho^{T-t-\tau}\frac{v_{t+\tau}}{v_T}&=  \left( \frac{(\xi -1 )\rho - \rho^2}{1-\rho^2}  \right) + \frac{1-(\xi-1) \rho}{1-\rho^2} \rho^{2(T-t-\tau)}\\
	& \geq \left( \frac{(\xi -1 )\rho - \rho^2}{1-\rho^2}  \right) 
	 = \left( \frac{\rho^2+1-\rho - \rho^2}{1-\rho^2}  \right)  =  \frac{1-\rho }{1-\rho^2}  
	\end{align*}
	where the  inequality is by $1-(\xi-1) \rho \geq 0$, $v_T \geq 0$, and the second equality is by  $\rho^2-\xi\rho+1=0$.
	
	Finally, combining three parts together,
	\begin{align*}
	a_{t,t+\tau}&= \frac{\alpha }{\beta}  \left[\rho^t w_t\right]\left[\rho^{-T}v_T\right] \left[\rho^{T-t-\tau}\frac{v_{t+\tau}}{v_T}\right]\rho^{\tau}\\
&	\geq  \frac{\alpha}{\beta} \rho \frac{1}{\xi-1 } \frac{1-\rho^2}{\rho} \left( \frac{1-\rho }{1-\rho^2}  \right)\rho^{\tau} = \frac{\alpha}{\alpha+\beta}(1-\rho)\rho^{\tau}
	\end{align*}

	\subsection{Proof of Theorem \ref{thm: W=0 lower bound}}\label{apen: W=0 lower bound}
	The proof is similar to the proof of Theorem \ref{thm: W>=1 lower bound}. We will also construct random cost functions and prove the lower bound of the regret in expectation. We will discuss two scenarios: $0 < L_T < D$, and $D \leq L_T \leq DT$ ($L_T=0$ is trivially true), and construct different function sequences to prove the lower bound. Without loss of generality, we let $x_0=0$.
	%	Remember that $0\leq L_T \leq DT$, so we will discuss two scenarios: $0 < L_T < D$, and $D \leq L_T \leq DT$ ($L_T=0$ is trivially true), and construct different function sequences to prove the lower bound. The proof will be very similar to the proof of Theorem \ref{thm: W>=1 lower bound}, we will first construct random sequence, then show that the lower bound holds in expectation. Without loss of generality, we let $x_0=0$.
		
		\nit{Scenario 1: $0 < L_T < D$.} 
		
		\nbf{Construction of random costs.} For each $0 < L_T < D$, we consider the following construction of $\mathbb X \subseteq \R^2$:
		$$\mathbb X= [-\frac{L_T}{2}, \frac{L_T}{2}]\times [ - \frac{\sqrt{D^2-L_T^2}}{2},  \frac{\sqrt{D^2-L_T^2}}{2}]$$
		It is easy to verify that the diameter of $\mathbb X$ is $D$.
		
		For any $\alpha >0$, consider the parametrized cost function:
		$$ f_t (x_t, y_t; \tilde x_t, \tilde y_t) = \frac{\alpha}{2} (x_t-\tilde x_t)^2 + \frac{\alpha}{2} (y_t-\tilde y_t)^2 $$
		where $\tilde x_t \in [-M, M]$  for  $M =D+ (1+ \beta/\alpha)\frac{L_T}{2}$ and $\tilde y_t \in [-\frac{D}{2}, \frac{D}{2}]$. It is easy to verify that the gradient bound is $G= \alpha  \sqrt{(M+D/2)^2 +D^2} \leq (3\alpha +\beta)D$.
		%where $(\tilde x_t, \tilde y_t)\in \R^2$ are parameters which may be outside the action space $\mathbb X$. It is easy to verify that $f_t (x_t, y_t; \tilde x_t, \tilde y_t) $ belongs to function class $\F_X(\alpha, \alpha, G)$, where $G= \alpha  \sqrt{(M+D/2)^2 +D^2}$ when 
	%	$\tilde y_t \in [-\frac{D}{2}, \frac{D}{2}]$ and $\tilde x_t \in [-M, M]$ and $M =D+ (1+ \beta/\alpha)\frac{L_T}{2}$.
		
		Next, we consider two possible function sequences, and each sequence happens with probability 1/2.
		
		Sequence 1: $\tilde x_1 = M$, $\tilde x_t = \frac{L_T}{2}$ for $t\geq 2$. $\tilde y_t =0$, $t \in[T]$.
		
		Sequence 2: $\tilde x_1 = -M$, $\tilde x_t = -\frac{L_T}{2}$ for $t\geq 2$. $\tilde y_t =0$, $t \in[T]$.
		%where $M = D+(1+ \beta/\alpha)L_T/2$.

		Let $(\theta_t, \varphi_t) = \argmin_{\mathbb X} f_t(x_t, y_t; \tilde x_t, \tilde y_t)$, and $(\bm x^*,\bm y^*) \coloneqq (x_1, y_1, \dots, x_T, y_T)= \argmin_{\mathbb X \times \dots \mathbb X} \mathsf{C_T}(\bm x,\bm y)$. Then, for each sequence of the cost functions, the optimal solutions $(\bm x^*,\bm y^*)$ are
		
		Sequence 1: $ \theta_t = x_t^*= \frac{L_T}{2}$, $\varphi_t =  y^*_t =0$ for $1\leq t \leq T$.
		
		Sequence 2: $ \theta_t = x_t^*= -\frac{L_T}{2}$, $\varphi_t =  y^*_t =0$ for $1\leq t \leq T$.

\vspace{6pt}
		\nbf{Bound $\E[\mathsf{C_T}(\bm x^\A,\bm y^\A)-\mathsf{C_T}(\bm x^*,\bm y^*)]$. } \\
		By strong convexity, we have
		\begin{align*}
		\E [\mathsf{C_T}(\bm x^\A,\bm y^\A)-\mathsf{C_T}(\bm x^*,\bm y^*)]&\geq \E \sum_{t=1}^T \left[ \frac{\partial \mathsf{C_T}}{\partial x_t}(\bm x^*,\bm y^*)(x_t^{\A}-x_t^* )+ \frac{\partial \mathsf{C_T}}{\partial y_t}(x^*,y^*)(y_t^{\A}-y_t^* )\right]\\
		& \geq \E  \left[ \frac{\partial \mathsf{C_T}}{\partial x_1}(x^*,y^*)(x_1^{\A}-x_1^* )\right] \\
		& = \frac{1}{2} (-h)(x_1^{\A}- \frac{L_T}{2})+ \frac{1}{2} h(x_1^{\A}+\frac{L_T}{2})  = \frac{1}{2} hL_T\\
			&\geq \frac{\alpha D}{2}   L_T \geq  \frac{\alpha DL_T}{32} (1-\rho)^2 \left(\frac{\alpha}{\alpha+ \beta}\right)^2
		\end{align*}
		where the second inequality is by $\frac{\partial \mathsf{C_T}}{\partial y_t}(\bm x^*,\bm y^*)=0$ when $t \geq 1$, and $\frac{\partial \mathsf{C_T}}{\partial x_t}(\bm x^*,\bm y^*)(x_t^{\A}-x_t^* )=0$ when $t \geq 2$; in the first equality, $h= \frac{\partial \mathsf{C_T}}{\partial x_1}(\bm x^*,\bm y^*) $ when the costs follow Sequence 2, so $\frac{\partial \mathsf{C_T}}{\partial x_1}(\bm x^*,\bm y^*) =-h$ when the costs follow Sequence 1; the third inequality is by $h \geq \alpha D$; the last inequality is to be consistent with the bound in Scenario 2.
		
%		Next, we will expand the expectation by considering two possible sequences:
%		\begin{align*}
%		&\E [\mathsf{C_T}(x^\A,y^\A)-\mathsf{C_T}(x^*,y^*)] \geq \E  \left[ \frac{\partial \mathsf{C_T}}{\partial x_1}(x^*,y^*)(x_1^{\A}-x_1^* )\right]\\
%		& = \frac{1}{2} (-h)(x_1^{\A}- \frac{L_T}{2})+ \frac{1}{2} h(x_1^{\A}+\frac{L_T}{2})  \geq \frac{1}{2} hL_T
%		\end{align*}
%		where $h= \frac{\partial \mathsf{C_T}}{\partial x_1}(x^*,y^*) $ when the costs follow Sequence 2 and $h= -\frac{\partial \mathsf{C_T}}{\partial x_1}(x^*,y^*) $ when the costs follow Sequence 1. When $M =D+ (1+ \beta/\alpha)\frac{L_T}{2}$, $h= \alpha D$ and
%		$$ \E [\mathsf{C_T}(x^\A,y^\A)-\mathsf{C_T}(x^*,y^*)] \geq  \frac{\alpha D}{2}   L_T$$
%		
%		Since  $G= \alpha  \sqrt{(M+D/2)^2 +D^2} \leq \alpha  D \sqrt{(2+ \beta/(2\alpha))^2 +1}$, it is easy to verify that our current lower bound  by showing that
%		\begin{align*}
%	&\frac{\alpha D}{2}   L_T \geq 	\frac{G L_T}{2 \sqrt{(2+ \beta/(2\alpha))^2 +1}}\geq \frac{GL_T}{4 (2+ \beta/(2\alpha)) }\\
%	& \geq \frac{\alpha GL_T}{8(\alpha+\beta) } \geq  \frac{GL_T}{32} (1-\rho)^2 \left(\frac{\alpha}{\alpha+ \beta}\right)^2
%		\end{align*}
	%	$$ \E [\mathsf{C_T}(x^A,y^A)-\mathsf{C_T}(x^*,y^*)] \geq  \frac{G}{2 \sqrt{(2+ \beta/(2\alpha))^2 +1}}   L_T$$
		
%		Therefore, for any $\A$, there must exist a sequence, either Sequence 1 or 2 or both, that makes 
%		$$ \mathsf{C_T}(x^A,y^A)-\mathsf{C_T}(x^*,y^*)\geq  \frac{\alpha^3D   ( 1 - \rho )^2 }{32(\alpha+\beta)^2  }L_T$$
%		which proves the lower bound of the dynamic regret.
%		

\vskip 6pt

\nit{Scenario 2: $D \leq L_T \leq DT$.}
The proof is the same as the proof of Theorem \ref{thm: W>=1 lower bound} except for one difference: when $W=0$, we are able to give a better bound for $|J|$ even without the condition $L_T \geq 2D$.

% be identical to the proof of Theorem \ref{thm: W>=1 lower bound} in Section \ref{subsec: proof of lower bdd} except for one difference: when $W=0$, we are able to give a better bound for $|J|$ even without the condition $L_T \geq 2D$. Notice that the condition $D\leq L_T \leq DT$ is still necessary for the construction of  $\theta$ in Section \ref{subsec: proof of lower bdd} to be well-defined.

The bound for $|J|$ is given below. 
\begin{lemma} \label{lem: |J|}
	When  $T \geq 1$, and $D \leq L_T \leq DT$, we have 
	$ |\mathbb J| \geq \frac{L_T}{4D}$.
\end{lemma}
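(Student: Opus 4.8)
The plan is to reduce $|J|$ to a single ceiling expression and then chain a few elementary floor/ceiling inequalities, exactly paralleling the proof of Lemma \ref{lem: W>0, LT>2D |J|} but exploiting the fact that here $W=0$, so no $W$-shift appears in the definition of $J$.

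First I would specialize the set $J$ to the present case. With $W=0$, the set defined in Section \ref{subsec: proof of lower bdd} becomes $J=\{1\le t\le T:\ t\equiv 1\pmod\Delta\}$, whose cardinality is precisely $\ceil{T/\Delta}$. So it suffices to show $\ceil{T/\Delta}\ge L_T/(4D)$.

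Next I would bound the block length $\Delta=\ceil{T/\floor{L_T/D}}$. Write $m=\floor{L_T/D}$. The hypothesis $D\le L_T\le DT$ gives $1\le m\le T$, hence $T/m\ge 1$, so
\begin{align*}
\Delta=\ceil{T/m}\le \frac{T}{m}+1\le \frac{2T}{m}.
\end{align*}
Therefore $|J|=\ceil{T/\Delta}\ge T/\Delta\ge m/2$. Finally, since $x\mapsto\floor{x}\ge x/2$ holds for all $x\ge 1$ (for $1\le x<2$ the left side is $1>x/2$, and for $x\ge 2$ one has $\floor{x}\ge x-1\ge x/2$), applying this with $x=L_T/D\ge 1$ yields $m\ge L_T/(2D)$, and combining gives $|J|\ge m/2\ge L_T/(4D)$, as claimed.

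There is essentially no hard step here; the whole argument is routine arithmetic with floors and ceilings. The only point worth flagging is the contrast with Lemma \ref{lem: W>0, LT>2D |J|}: there the shift by $W$ in the definition of $J$ forces the extra hypothesis $L_T\ge 2D$ and the weaker constant $1/12$, whereas for $W=0$ the cleaner identity $|J|=\ceil{T/\Delta}$ lets us get by with only $L_T\ge D$ and the better constant $1/4$. One should double-check that the edge case $m=1$ (i.e.\ $D\le L_T<2D$) is covered, but it is, since all the inequalities above only used $1\le m\le T$.
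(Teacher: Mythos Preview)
Your proof is correct and takes essentially the same approach as the paper: both identify $|J|=\ceil{T/\Delta}$ for $W=0$, bound $\Delta\le T/m+1$ with $m=\floor{L_T/D}$, use $m\le T$ and $\floor{x}\ge x/2$ for $x\ge1$, and arrive at $L_T/(4D)$. The only cosmetic difference is that you fold $T/m+1\le 2T/m$ upfront, whereas the paper rewrites $T/(T/m+1)=m\cdot T/(T+m)$ and bounds the fraction by $1/2$; these are the same arithmetic rearranged.
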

\begin{proof}
	
	By definition of $\mathbb J$ and 
	$\Delta  =\ceil{T/\floor{L_T/D}}\leq T/\floor{L_T/D}+1$,  when $L_T\geq D$ and $T\geq 1$, we have
	\begin{align*}
	|\mathbb J| &= \ceil{\frac{T}{\Delta}} \geq  \frac{T}{\Delta} \geq \frac{T}{T/\floor{L_T/D}+1}\\
	& = \floor{L_T/D}\frac{T}{T+\floor{L_T/D}} \geq \frac{L_T}{2D}\frac{T}{T+T} 
	 =\frac{L_T}{4D}
	\end{align*}
by $\floor{x} \geq x/2$ when $x\geq 1$, and $L_T \leq DT$.
\end{proof}

Then, the lower bound on the expected regret can be proved in the same way as in the proof of Theorem \ref{thm: W>=1 lower bound}.
\begin{align*}
&\E[\mathsf{C_T}(\bm x^{\A})-\mathsf{C_T}(\bm x^*)] \geq\E \frac{\alpha}{2} \|\bm x^{\A}-\bm x^*\|^2  \geq  \frac{\alpha}{2}\sum_{t\in \mathbb J} \frac{a_{t,t}^2 D^2}{4}  
 \geq \frac{\alpha D L_T}{32} (1-\rho)^2 \left(\frac{\alpha}{\alpha+ \beta}\right)^2
\end{align*}

\subsection{Additional numerical results for the last paragraph of Section III}\label{aped: rhgd recent}
In the last paragraph of Section III, we mention that our RHGD and RHAG do not use the latest information, i.e. RHGD computes $x_{s}(k)$ with $x_{s-1}(k-1)$ in Step 2 even though $x_{s-1}(k)$ is available (RHAG is similar). In this section, we provide numerical results that compare RHGD and RHAG with their variants that use the latest information, i.e. computing $x_{s}(k)$ with $x_{s-1}(k)$ in Step 2 of RHGD (RHAG is similar).  The setup of our numerical experiment is the same as Section VI.2.

%The setup of our numerical experiment is given below.
%We consider the cost function for logistic regression \cite{logisticregression} with a quadratic regularization term \cite{goel2019online}:
%$$ f_t(x_t)= \frac{1}{M}\sum_{m=1}^M\left[ \log\left( 1+ e^{ w_{t,m}^\top x_t }\right)-v_{t,m} w_{t,m}^\top x_t\right]+ \frac{r}{2}\|x_t\|^2$$
%where $M$ is the number of samples, $w_{t,m} \in R^N$ are features, $v_{t,m} \in \{0,1\}$ are labels, and $r$ is the regularization parameter. In particular, we consider $M=60$, $N=3$, $T=60$, $\beta=5$, and $r=0.5$. We generate $\{w_{t,m}\}_{m=1}^M$ as i.i.d. random Gaussian vectors with mean $\mu_t$ and variance $\sigma_t^2$, where  mean values $\mu_1, \dots, \mu_T$ are uniformly randomly drawn from $[-1,1]$, and variance $\sigma_1^2, \dots, \sigma_T^2$ are uniformly drawn from $[0,1]$. Similarly, we generate $\{v_{t,m}\}_{m=1}^M$ as i.i.d random values from Bernoulli distribution with parameter $p_t$, where $p_1, \dots, p_T$ are uniformly drawn from $[0,1]$. We consider initial estimator $x_0=0$ and feasible region $X=[-1,1]^3$.  % {stepsize are not exactly the factors.} 
%The stepsizes are based on the strong convexity factor and the smoothness factor. 
\begin{figure}
	\centering
	\includegraphics[width=0.5\linewidth]{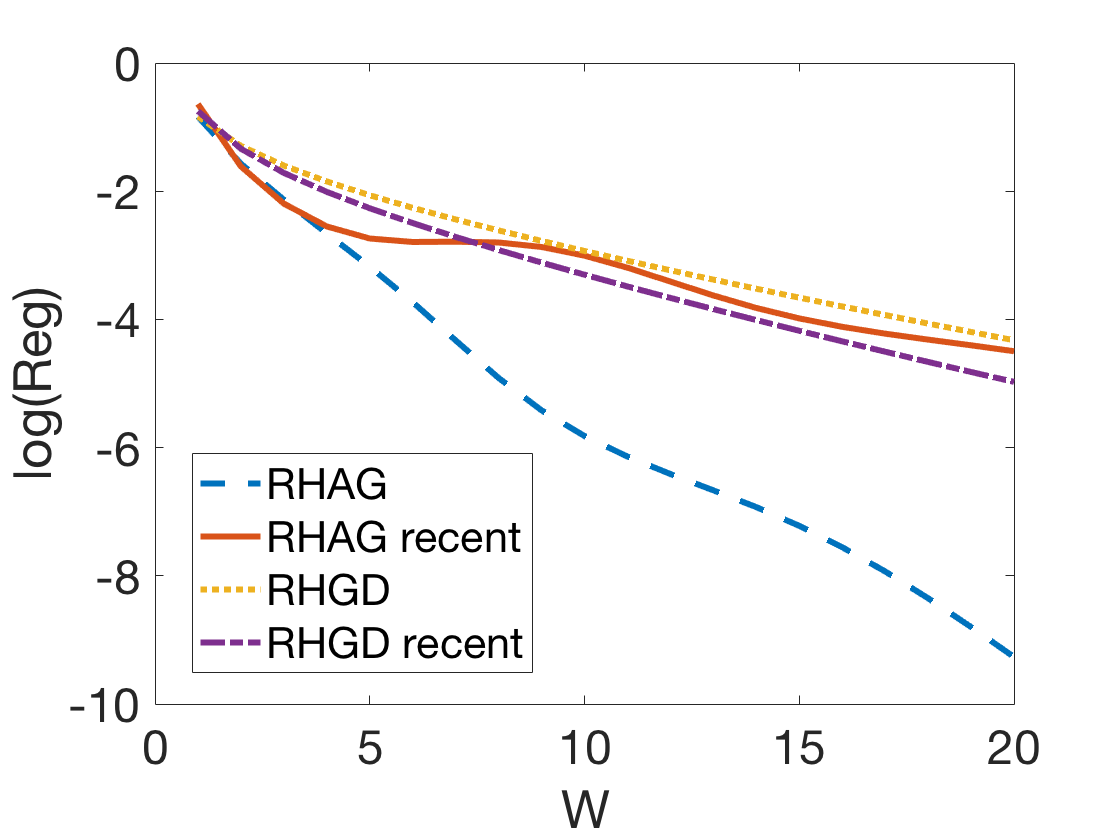}
	\caption{Comparison between RHGD, RHAG and their variants that use the most recent information.}
	\label{fig:rhagrecent}
\end{figure}

In Figure \ref{fig:rhagrecent}, we compare RHGD, RHAG with their variants that use the most recent information, i.e. RHGD-recent and RHAG-recent. It can be observed that by utilizing the most recent information, the performance of RHGD-recent and RHGD are similar. However, RHAG-recent performs much worse than RHAG. Some  intuitive explanation is provided below. Since RHAG implements  the offline Nesterov's accelerated gradient (NAG) exactly, RHAG-recent can be viewed as the offline NAG with some errors/disturbances in the inputs at each iteration. It is well-known that NAG is  sensitive to disturbances and small disturbances can worsen the performance a lot \cite{devolder2014first}. This might explain why  RHAG-recent performs much worse than RHAG.

%In this section, we provide numerical experiments to c

\bibliographystyle{IEEEtran}
% argument is your BibTeX string definitions and bibliography database(s)
%\bibliography{IEEEabrv,../bib/paper}
\bibliography{citation4OL}

% Generated by IEEEtran.bst, version: 1.14 (2015/08/26)
\begin{thebibliography}{10}
\providecommand{\url}[1]{#1}
\csname url@samestyle\endcsname
\providecommand{\newblock}{\relax}
\providecommand{\bibinfo}[2]{#2}
\providecommand{\BIBentrySTDinterwordspacing}{\spaceskip=0pt\relax}
\providecommand{\BIBentryALTinterwordstretchfactor}{4}
\providecommand{\BIBentryALTinterwordspacing}{\spaceskip=\fontdimen2\font plus
\BIBentryALTinterwordstretchfactor\fontdimen3\font minus
  \fontdimen4\font\relax}
\providecommand{\BIBforeignlanguage}[2]{{%
\expandafter\ifx\csname l@#1\endcsname\relax
\typeout{** WARNING: IEEEtran.bst: No hyphenation pattern has been}%
\typeout{** loaded for the language `#1'. Using the pattern for}%
\typeout{** the default language instead.}%
\else
\language=\csname l@#1\endcsname
\fi
#2}}
\providecommand{\BIBdecl}{\relax}
\BIBdecl

\bibitem{lin2012online}
M.~Lin, Z.~Liu, A.~Wierman, and L.~L. Andrew, ``Online algorithms for
  geographical load balancing,'' in \emph{2012 international green computing
  conference (IGCC)}.\hskip 1em plus 0.5em minus 0.4em\relax IEEE, 2012, pp.
  1--10.

\bibitem{tanaka2006real}
M.~Tanaka, ``Real-time pricing with ramping costs: A new approach to managing a
  steep change in electricity demand,'' \emph{Energy Policy}, vol.~34, no.~18,
  pp. 3634--3643, 2006.

\bibitem{zenke2017continual}
F.~Zenke, B.~Poole, and S.~Ganguli, ``Continual learning through synaptic
  intelligence,'' in \emph{Proceedings of the 34th International Conference on
  Machine Learning-Volume 70}, 2017, pp. 3987--3995.

\bibitem{rios2016survey}
J.~Rios-Torres and A.~A. Malikopoulos, ``A survey on the coordination of
  connected and automated vehicles at intersections and merging at highway
  on-ramps,'' \emph{IEEE Transactions on Intelligent Transportation Systems},
  vol.~18, no.~5, pp. 1066--1077, 2016.

\bibitem{hazan2016introduction}
E.~Hazan, \emph{Introduction to Online Convex Optimization}.\hskip 1em plus
  0.5em minus 0.4em\relax Now Publishers, 2016.

\bibitem{chen2016using}
N.~Chen, J.~Comden, Z.~Liu, A.~Gandhi, and A.~Wierman, ``Using predictions in
  online optimization: Looking forward with an eye on the past,'' in
  \emph{Proceedings of the 2016 ACM SIGMETRICS International Conference on
  Measurement and Modeling of Computer Science}.\hskip 1em plus 0.5em minus
  0.4em\relax ACM, 2016, pp. 193--206.

\bibitem{rawlings2012postface}
J.~Rawlings and D.~Mayne, ``Postface to model predictive control: Theory and
  design,'' \emph{Nob Hill Pub}, pp. 155--158, 2012.

\bibitem{alessio2009survey}
A.~Alessio and A.~Bemporad, ``A survey on explicit model predictive control,''
  in \emph{Nonlinear model predictive control}.\hskip 1em plus 0.5em minus
  0.4em\relax Springer, 2009, pp. 345--369.

\bibitem{kogel2014stabilization}
M.~K{\"o}gel and R.~Findeisen, ``Stabilization of inexact {MPC} schemes,'' in
  \emph{53rd IEEE Conference on Decision and Control}.\hskip 1em plus 0.5em
  minus 0.4em\relax IEEE, 2014, pp. 5922--5928.

\bibitem{wang2010fast}
Y.~Wang and S.~Boyd, ``Fast model predictive control using online
  optimization,'' \emph{IEEE Transactions on Control Systems Technology},
  vol.~18, no.~2, pp. 267--278, 2010.

\bibitem{graichen2010stability}
K.~Graichen and A.~Kugi, ``Stability and incremental improvement of suboptimal
  {MPC} without terminal constraints,'' \emph{IEEE Transactions on Automatic
  Control}, vol.~55, no.~11, pp. 2576--2580, 2010.

\bibitem{paternain2018prediction}
S.~Paternain, M.~Morari, and A.~Ribeiro, ``A prediction-correction method for
  model predictive control,'' in \emph{2018 Annual American Control Conference
  (ACC)}.\hskip 1em plus 0.5em minus 0.4em\relax IEEE, 2018, pp. 4189--4194.

\bibitem{diehl2005nominal}
M.~Diehl, R.~Findeisen, F.~Allg{\"o}wer, H.~G. Bock, and J.~P. Schl{\"o}der,
  ``Nominal stability of real-time iteration scheme for nonlinear model
  predictive control,'' \emph{IEEE Proceedings-Control Theory and
  Applications}, vol. 152, no.~3, pp. 296--308, 2005.

\bibitem{mokhtari2016online}
A.~Mokhtari, S.~Shahrampour, A.~Jadbabaie, and A.~Ribeiro, ``Online
  optimization in dynamic environments: Improved regret rates for strongly
  convex problems,'' in \emph{55th IEEE Conference on Decision and Control
  (CDC)}.\hskip 1em plus 0.5em minus 0.4em\relax IEEE, 2016, pp. 7195--7201.

\bibitem{ellis2014economic}
M.~Ellis and P.~D. Christofides, ``Economic model predictive control with
  time-varying objective function for nonlinear process systems,'' \emph{AIChE
  Journal}, vol.~60, no.~2, pp. 507--519, 2014.

\bibitem{ferramosca2014economic}
A.~Ferramosca, D.~Limon, and E.~F. Camacho, ``Economic {MPC} for a changing
  economic criterion for linear systems,'' \emph{IEEE Transactions on Automatic
  Control}, vol.~59, no.~10, pp. 2657--2667, 2014.

\bibitem{angeli2016theoretical}
D.~Angeli, A.~Casavola, and F.~Tedesco, ``Theoretical advances on economic
  model predictive control with time-varying costs,'' \emph{Annual Reviews in
  Control}, vol.~41, pp. 218--224, 2016.

\bibitem{alessandretti2016convergence}
A.~Alessandretti, A.~P. Aguiar, and C.~N. Jones, ``On convergence and
  performance certification of a continuous-time economic model predictive
  control scheme with time-varying performance index,'' \emph{Automatica},
  vol.~68, pp. 305--313, 2016.

\bibitem{grune2018economic}
L.~Gr{\"u}ne and S.~Pirkelmann, ``Economic model predictive control for
  time-varying system: Performance and stability results,'' \emph{Optimal
  Control Applications and Methods}, 2018.

\bibitem{zavala2010real}
V.~M. Zavala and M.~Anitescu, ``Real-time nonlinear optimization as a
  generalized equation,'' \emph{SIAM Journal on Control and Optimization},
  vol.~48, no.~8, pp. 5444--5467, 2010.

\bibitem{zanon2013lyapunov}
M.~Zanon, S.~Gros, and M.~Diehl, ``A {L}yapunov function for periodic economic
  optimizing model predictive control,'' in \emph{52nd IEEE Conference on
  Decision and control}.\hskip 1em plus 0.5em minus 0.4em\relax IEEE, 2013, pp.
  5107--5112.

\bibitem{li2019online}
Y.~Li, X.~Chen, and N.~Li, ``Online optimal control with linear dynamics and
  predictions: Algorithms and regret analysis,'' in \emph{Advances in Neural
  Information Processing Systems}, 2019, pp. 14\,858--14\,870.

\bibitem{goel2019online}
G.~Goel and A.~Wierman, ``An online algorithm for smoothed regression and {LQR}
  control,'' \emph{Proceedings of Machine Learning Research}, vol.~89, pp.
  2504--2513, 2019.

\bibitem{rakhlin2013online}
A.~Rakhlin and K.~Sridharan, ``Online learning with predictable sequences,'' in
  \emph{Conference on Learning Theory}, 2013, pp. 993--1019.

\bibitem{besbes2015non}
O.~Besbes, Y.~Gur, and A.~Zeevi, ``Non-stationary stochastic optimization,''
  \emph{Operations research}, vol.~63, no.~5, pp. 1227--1244, 2015.

\bibitem{andrew2013tale}
L.~Andrew, S.~Barman, K.~Ligett, M.~Lin, A.~Meyerson, A.~Roytman, and
  A.~Wierman, ``A tale of two metrics: Simultaneous bounds on competitiveness
  and regret,'' in \emph{Conference on Learning Theory}, 2013, pp. 741--763.

\bibitem{simonetto2016class}
A.~Simonetto, A.~Mokhtari, A.~Koppel, G.~Leus, and A.~Ribeiro, ``A class of
  prediction-correction methods for time-varying convex optimization,''
  \emph{IEEE Transactions on Signal Processing}, vol.~64, no.~17, pp.
  4576--4591, 2016.

\bibitem{tang2018running}
Y.~Tang, E.~Dall'Anese, A.~Bernstein, and S.~Low, ``Running primal-dual
  gradient method for time-varying nonconvex problems,'' \emph{arXiv preprint
  arXiv:1812.00613}, 2018.

\bibitem{nesterov2013introductory}
Y.~Nesterov, \emph{Introductory lectures on convex optimization: A basic
  course}.\hskip 1em plus 0.5em minus 0.4em\relax Springer Science \& Business
  Media, 2013, vol.~87.

\bibitem{rosales1998improved}
R.~Rosales and S.~Sclaroff, ``Improved tracking of multiple humans with
  trajectory prediction and occlusion modeling,'' Boston University Computer
  Science Department, Tech. Rep., 1998.

\bibitem{pentina2015curriculum}
A.~Pentina, V.~Sharmanska, and C.~H. Lampert, ``Curriculum learning of multiple
  tasks,'' in \emph{Proceedings of the IEEE Conference on Computer Vision and
  Pattern Recognition}, 2015, pp. 5492--5500.

\bibitem{li2018online}
Y.~Li, G.~Qu, and N.~Li, ``Online optimization with predictions and switching
  costs: Fast algorithms and the fundamental limit,'' \emph{arXiv preprint
  arXiv:1801.07780}, 2018.

\bibitem{devolder2014first}
O.~Devolder, F.~Glineur, and Y.~Nesterov, ``First-order methods of smooth
  convex optimization with inexact oracle,'' \emph{Mathematical Programming},
  vol. 146, no. 1-2, pp. 37--75, 2014.

\bibitem{logisticregression}
\BIBentryALTinterwordspacing
(2012) Logistic regression. [Online]. Available:
  \url{http://www.stat.cmu.edu/\~cshalizi/uADA/12/lectures/ch12.pdf}
\BIBentrySTDinterwordspacing

\bibitem{su2014differential}
W.~Su, S.~Boyd, and E.~Candes, ``A differential equation for modeling
  nesterov's accelerated gradient method: Theory and insights,'' in
  \emph{Advances in Neural Information Processing Systems}, 2014, pp.
  2510--2518.

\bibitem{durrett2019probability}
R.~Durrett, \emph{Probability: theory and examples}.\hskip 1em plus 0.5em minus
  0.4em\relax Cambridge university press, 2019, vol.~49.

\bibitem{varah1975lower}
J.~M. Varah, ``A lower bound for the smallest singular value of a matrix,''
  \emph{Linear Algebra and its Applications}, vol.~11, no.~1, pp. 3--5, 1975.

\bibitem{concus1985block}
P.~Concus, G.~H. Golub, and G.~Meurant, ``Block preconditioning for the
  conjugate gradient method,'' \emph{SIAM Journal on Scientific and Statistical
  Computing}, vol.~6, no.~1, pp. 220--252, 1985.

\end{thebibliography}

%\IEEEpeerreviewmaketitle

%\input{sec1_introduction}
%%%\input{circles}
%\input{sec2_problem_formulation}
%\input{sec3_prev_alg}
%\input{sec4_my_alg}
%\input{sec5_performance}
%\input{sec6_lowerbdd}
%\input{sec7_simulation}
%\input{sec8_conclusion}
%\input{appendix}
%\input{appendixB}
%

%\section*{Acknowledgment}

%The authors would like to thank...

% Can use something like this to put references on a page
% by themselves when using endfloat and the captionsoff option.
\ifCLASSOPTIONcaptionsoff
  \newpage
\fi

\end{document}